\newcommand{\be}{\begin{eqnarray}}
\newcommand{\ee}{\end{eqnarray}}
\newcommand{\ben}{\begin{eqnarray*}}
\newcommand{\een}{\end{eqnarray*}}
\newtheorem{thm}{Theorem}[section]
\newtheorem{cor}{Corollary}[section]
\newtheorem{lema}{Lemma}[section]
\newtheorem{prop}{Proposition}[section]
\numberwithin{equation}{section}
\begin{document}
\title[Uniqueness of positive radial solutions]
      {On the uniqueness of solutions of
            a semilinear equation  in an annulus  }\thanks{This research was supported by
        FONDECYT-1190102 for the first and second author, and
        FONDECYT- 1170665 for the  third author and by JSPS KAKENHI Grant Number
        19K03595 and 17H01095 for the fourth author.}
\author[C. Cort\'azar]{Carmen Cort\'azar}
\address{Departamento de Matem\'atica, Pontificia
        Universidad Cat\'olica de Chile,
        Casilla 306, Correo 22,
        Santiago, Chile.}
\email{\tt ccortaza@mat.uc.cl}
\author[M. Garc\'\i a-Huidobro]{Marta Garc\'{\i}a-Huidobro}
\address{Departamento de Matem\'atica, Pontificia
        Universidad Cat\'olica de Chile,
        Casilla 306, Correo 22,
        Santiago, Chile.}
\email{\tt mgarcia@mat.uc.cl}
\author[P. Herreros]{Pilar Herreros}
\address{Departamento de Matem\'atica, Pontificia
        Universidad Cat\'olica de Chile,
        Casilla 306, Correo 22,
        Santiago, Chile.}
\email{\tt pherrero@mat.uc.cl}
\author[S. Tanaka]{Satoshi Tanaka}
\address{Mathematical Institute, Tohoku University,
         Aoba 6--3, Aramaki, Aoba-ku, Sendai 980--8578, Japan.}
\email{\tt satoshi.tanaka.d4@tohoku.ac.jp}

%\maketitle

%%%%%%%%%%%%%%%%%%%%%%%%%%%%%%%%%%%%%%%%%%%%%%%%%%%%%%%%%%%%%%%%%%%%%%%%

\begin{abstract}
	We establish the  uniqueness of positive   radial  solutions
	of
	$$\begin{cases} \Delta u
	+f(u)=0,\quad x\in A \\ u(x) =0 \quad x\in \partial A
	\end{cases} \leqno( P) $$
	where $A:=A_{a,b}=\{ x\in {\mathbb R}^n : a<|x|<b \}$, $0<a<b\le\infty$.
	We assume that the nonlinearity  $f\in C[0,\infty)\cap C^1(0,\infty)$ is such that $f(0)=0$ and  satisfies some
	convexity and growth conditions, and either $f(s)>0$ for all $s>0$, or has one zero at $B>0$,
	is non positive and not identically 0 in $(0,B)$ and it is positive in $(B,\infty)$.
\end{abstract}

\maketitle
\today

\section{Introduction and main results}

In this paper we study the uniqueness of positive radial solutions of problem \eqref{P}
\begin{equation}
 \begin{cases}
  \Delta u +f(u)=0, \quad x\in A, \\
  u=0, \quad x\in \partial A,
 \end{cases}
  \label{P}
\end{equation}
where $n\ge2$, $A=\{ x\in {\mathbb R}^n : a<|x|<b \}$, $0<a<b\le\infty$ and $f$ is a smooth function.
When $b=\infty$, we mean that $A$ is the exterior of the ball centered at the origin with radius $a$ so the boundary condition is interpreted as $u=0$ on
$|x|=a$ and $\lim_{|x|\to\infty}u(x)=0$.
  We will  give conditions on $f$ under which
\begin{equation}\label{R}
 \begin{cases}
  u''(r)+\displaystyle\frac{n-1}{r}u'(r)+f(u(r))=0, \quad r \in (a,b),\quad n\ge 2, \\[1ex]
  u(a)=u(b)=0
 \end{cases}
\end{equation}
has at most one positive solution for every $a$ and $b$.

We will   assume that $f$ satisfies the following basic structural and subcriticality conditions:
\begin{enumerate}
	\item[$(f_1)$] $f$ is continuous in $[0,\infty)$, continuously differentiable in $(0,\infty)$ and $f'\in L^1(0,1)$,
	\item[$(f_2)$] $f(0)=0$ and either there exist $\beta> B> 0$ such that $f(s)>0$ for $s>B$,
	$f(s)\le 0$ for $s\in[0,B]$   with $f(s)<0$ in $(0,\epsilon)$ for some $\epsilon>0$ and  $\int_0^\beta f(t)dt=0$, or $\beta=B=0$ and $f(s)>0$ for $s>B$.
		\item[$(f_3)$] $\displaystyle\Bigl(\frac{F}{f}\Bigr)'(s)\ge \frac{n-2}{2n}$ for all $s>\beta$,  where $F(s)=\int_0^sf(t)dt$ ( subcriticality),
\end{enumerate}

Observe that if $b=\infty$, then in order to have existence of a solution of
\eqref{P} with $u(r)>0$ for $ r \in (a,\infty)$ it is necessary that $B>0$, so we will assume it whenever we treat the case $b=\infty$.

It is known that the number of nonequivalent nonradial solutions of
this problem tends to infinity as $a \to \infty$
for the case where $b=a+1$, $f(s)\equiv s^p$, $p>1$ and $p<(n+2)/(n-2)$
if $n\ge 3$.
See Byeon \cite{byeon}, Coffman \cite{coff2} and Li \cite{li}.
The uniqueness of radial solutions on an annulus has been studied in
\cite{ChG}, \cite{coff3}, \cite{fmt}, \cite{fl}, \cite{ko}, \cite{kz},
\cite{lz}, \cite{n}, \cite{nn}, \cite{sw1}, \cite{sw2}, \cite{ta},
and \cite{y2}.
%Cheng and Guang \cite{Chg} and Fu and Lin \cite{fl} gave the uniqueness
%results of positive solutions of \eqref{R},
The case $f(s)\equiv s^p-s$, $p>1$ is treated by
Coffman \cite{coff3}, Yadava \cite{y1}, \cite{y2}, Tang \cite{ta}
and Felmer, Martinez and Tanaka \cite{fmt}.
More general equations are considered by Erbe and Tang \cite{et2} and
Li and Zhou \cite{lz}.
Kwong and Zhang \cite{kz} considered the boundary condition $u'(a)=0$ and
$u(b)=0$.
Shioji and Watanabe \cite{sw1}, \cite{sw2} introduced a generalized
Poho\v{z}aev identity and presented the uniqueness results of a
positive radial solution in a ball, the entire space, an annulus, or an
exterior domain under Dirichlet boundary condition.
The following Theorems A and B have been established by
Ni and Nussbaum \cite[Theorem 1.8]{nn}.

\medskip

\noindent{\bf Theorem A (Ni and Nussbaum \cite[Theorem 1.8]{nn}).} \it
Let $n \ge 3$ and $b<\infty$.
Assume that $f \in C^1[0,\infty)$, $f(s)>0$ for $s>0$ and
\begin{equation*}
1 < \frac{sf'(s)}{f(s)} \leq \frac{n}{n-2} + \frac{2}{(b/a)^{n-2}-1}
\quad \mbox{for} \ s>0.
\end{equation*}
Then problem \eqref{R} has at most one positive solution.

\rm

\medskip

From Theorem A, we have the following corollary.

\medskip

\noindent{\bf Corollary A.} \it
Let $n \ge 3$.
Assume that $f \in C^1[0,\infty)$, $f(s)>0$ for $s>0$ and
\begin{equation*}
1 < \frac{sf'(s)}{f(s)} \leq \frac{n}{n-2} \quad \mbox{for} \ s>0.
\end{equation*}
Then problem \eqref{R} has at most one positive solution for every
$a$ and $b$ with $0<a<b<\infty$.
In particular, if $f(s)=s^p+s^q$, $1<q \le p\le n/(n-2)$, then
problem \eqref{R} has at most one positive solution for every
$a$ and $b$ with $0<a<b<\infty$.

\rm

\medskip

When $n=2$, recently, Shioji, Tanaka and Watanabe \cite{stw} proved that
problem \eqref{R} has at most one positive solution for every $a$ and
$b$ with $0<a<b<\infty$, provided $n=2$, $f \in C^1[0,\infty)$,
$f(s)>0$ and $1<sf'(s)/f(s) \le 5$ for $s>0$.
Therefore, a positive solution is unique if $sf'(s)/f(s)$ is small in
some sense.
If not, problem \eqref{R} can have at least three positive solutions.
Indeed, Ni and Nussbaum \cite[Theorem 1.10]{nn} proved the following result.

\medskip

\noindent{\bf Theorem B (Ni and Nussbaum \cite[Theorem 1.10]{nn}).} \it
Let $n \ge 3$.
Assume that $f(s)\equiv s^p+s^q$ for $s>0$, $1<p<(n+2)/(n-2)<q<\infty$.
Then there exist $a$ and $b$ with $0<a<b<\infty$ such that
problem \eqref{R} has at least three positive solutions.

\rm

\medskip

Actually, Ni and Nussbaum \cite{nn} considered the case where
$f(s)\equiv s^p+\varepsilon s^q$, $\varepsilon>0$, but the transformation
$v(r)=\varepsilon^\frac{1}{q-p}u(\varepsilon^\frac{p-1}{2(q-p)}r)$
reduces to the case $\varepsilon=1$.

Ni and Nussbaum \cite{nn} also proved that problem \eqref{R} has at
most one positive solution if $b/a$ is small in some sense when $B=0$,
and later Korman \cite[Theorem 3.3]{ko} improved it.

Yadava \cite{y1} employed the generalized Pohozaev identity and obtained
uniqueness results for the case $f(u)=u^p+u^q$, $p>q\ge 1$.

\medskip

\noindent{\bf Theorem C (Yadava \cite[Theorem 1.10]{y1}).} \it
Let $n \ge 3$.
Assume that $f(s)\equiv s^p+s^q$, $p>q\ge 1$.
Then problem \eqref{R} with $b<\infty$ has at most one positive solution
for every $a$ and $b$ with $0<a<b<\infty$ if one of the following conditions
{\rm (i)--(iii)} is satisfied{\rm :}
\begin{enumerate}
	\item[(i)] $q=1$ and $1<p\le (n+2)/(n-2)${\rm ;}
	\item[(ii)] $1<q<p<(n+2)/(n-2)$ and $(p-1)/(q+1)\le2/n${\rm ;}
	\item[(iii)] $(n+2)/(n-2)<q \le p$ and $(q-1)/(p+1)\ge2/n$.
\end{enumerate}

\rm

Our results for general $f$ improve the existing ones  in the sense that our conditions do not depend on the width of the annulus.

Our main results are the following.

\begin{thm}\label{main0}
 If $f$ satisfies $(f_1)$--$(f_3)$, and assume furthermore that $f$ satisfies
 \begin{enumerate}
 	\item[$(f_4)$] $ f(s)\leq f'(s) (s-B)$ for all $s>B $, $f(s)\not\equiv f'(s) (s-B)$ for $s\in [B,\beta]$ (superlinearity).
 	 \end{enumerate}
 Then problem \eqref{R}
 has at most one positive solution for every $a$ and $b$ with $0<a<b<\infty$.
\end{thm}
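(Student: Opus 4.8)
The plan is to prove uniqueness by a shooting argument reduced, through linearization, to a sign condition that the two hypotheses $(f_3)$ and $(f_4)$ are designed to control. For $\alpha>0$ let $u(\cdot\,;\alpha)$ be the solution of the initial value problem associated with \eqref{R} satisfying $u(a)=0$ and $u'(a)=\alpha$, and let $R(\alpha)$ denote its first zero in $(a,\infty)$. A positive solution of \eqref{R} corresponds exactly to a value of $\alpha$ with $R(\alpha)=b$, so it suffices to show that the first-zero map $R$ is strictly monotone on the set where it is defined. Differentiating with respect to the shooting parameter, set $\phi:=\partial u/\partial\alpha$; then $\phi$ solves the linearized equation
\[
 \phi''+\frac{n-1}{r}\phi'+f'(u)\phi=0,\qquad \phi(a)=0,\quad \phi'(a)=1 ,
\]
and differentiating $u(R(\alpha);\alpha)=0$ gives $R'(\alpha)=-\phi(R(\alpha))/u'(R(\alpha))$. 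Since $u'(R)<0$ at a first zero, the sign of $R'(\alpha)$ coincides with that of $\phi(R(\alpha))$, so the entire problem reduces to showing that $\phi$ has one fixed sign at the first zero of $u$.

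The superlinearity hypothesis $(f_4)$ governs the lower bound on the oscillation of $\phi$. Writing $L\psi:=\psi''+\frac{n-1}{r}\psi'+f'(u)\psi$, a direct use of the equation gives $Lu=f'(u)u-f(u)$, and $(f_4)$ yields both $f'(s)>0$ for $s>B$ and $f'(u)u-f(u)\ge f'(u)B\ge 0$ wherever $u>B$; equivalently $u$ solves an equation with potential $\tilde q=f(u)/u\le f'(u)$. By the Sturm comparison theorem applied to $u$ (potential $\tilde q$) and $\phi$ (the larger potential $f'(u)$), the function $\phi$ must vanish at least once strictly between the consecutive zeros $a$ and $R$ of $u$. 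Thus $\phi$ starts positive at $a^+$ and changes sign inside $(a,R)$.

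The subcriticality hypothesis $(f_3)$ supplies the matching upper bound, forcing \emph{exactly} one sign change and hence $\phi(R)<0$. Here $(f_3)$ is used in integrated form: integrating $(F/f)'\ge (n-2)/(2n)$ from $\beta$, and using the normalization $\int_0^\beta f=0$ (so that $F(\beta)=0$ and $(F/f)(\beta)=0$), one obtains the Pohozaev-type sign condition $nF(s)-\tfrac{n-2}{2}sf(s)\ge 0$. Feeding this into the Pohozaev identity for the scaling function $P:=ru'+\tfrac{n-2}{2}u$ (which satisfies $L(ru')=-2f(u)$, hence an inhomogeneous linear equation) controls the number of zeros of $P$, and comparing $\phi$ against $P$ bounds the oscillation of $\phi$ from above. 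Combining the two bounds shows $\phi$ has precisely one zero in $(a,R)$ and $\phi(R)<0$, so $R'(\alpha)<0$ throughout; $R$ is then strictly decreasing and can meet the value $b$ at most once, giving the uniqueness asserted in Theorem~\ref{main0}.

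The main obstacle is twofold. First, the upper oscillation bound extracted from $(f_3)$ is the genuinely delicate estimate: translating the pointwise inequality on $(F/f)'$ into effective control of the zeros of $\phi$ via the Pohozaev function requires care, since $(f_3)$ bounds $Ff'/f^2$ rather than $sf'/f$ directly, and the two agree only after the integration described above. Second, in the case $B>0$ of $(f_2)$ there is an inner annular layer where $u<B$ and $f(u)<0$; there the potential $\tilde q=f(u)/u$ and the comparison functions change character, so the Sturm and Pohozaev estimates must be propagated across the interface $u=B$ without creating a spurious sign change of $\phi$. Closing this transition—showing that the bookkeeping of zeros survives the region below $B$—is precisely where the normalization $\int_0^\beta f=0$ and the strict part of $(f_4)$ are needed, and it is the step I expect to require the most work.
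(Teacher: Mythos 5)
Your proposal takes a genuinely different route from the paper (the paper never linearizes: it assumes two positive solutions $u_1,u_2$ with $0<u_1'(a)<u_2'(a)$ and compares them directly in the variable $s$ through the inverse functions $r_i,\bar r_i$ and the functionals $V_i,P_i,\bar P_i,W_i$), but as written it is a plan rather than a proof, and the gaps sit exactly where the theorem is hard. The central claim --- that $\phi$ changes sign exactly once in $(a,R)$ so that $\phi(R)<0$ --- is asserted, not established, and your route to the ``upper oscillation bound'' rests on an inequality that is false when $\beta>0$: integrating $(F/f)'\ge\frac{n-2}{2n}$ from $\beta$, where $F(\beta)=0$, gives only $nF(s)\ge\frac{n-2}{2}(s-\beta)f(s)$ for $s>\beta$, which yields your Pohozaev sign condition $nF(s)-\frac{n-2}{2}sf(s)\ge0$ only in the case $\beta=0$; moreover, on $(B,\beta)$ one has $F<0<f$, so that quantity is strictly negative precisely in the transition layer you must cross. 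The lower bound has the same defect: $(f_4)$ gives $f'(u)\ge f(u)/u$ only where $u>B$, and nothing in $(f_1)$--$(f_4)$ orders $f'(s)$ against $f(s)/s$ on $(0,B)$, so the two potentials are not comparable on all of $(a,R)$ and the Sturm interlacing theorem cannot be invoked between the consecutive zeros $a$ and $R$ of $u$. You flag this layer yourself as ``the step I expect to require the most work''; that step is the theorem.

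Two further points. First, even granting $\phi(R(\alpha))<0$ at every solution, the shooting reduction needs the domain of the first-zero map $R$ to be an interval (for an intermediate $\alpha$ the solution may never return to zero), or else strict monotonicity on each component does not preclude two solutions; this requires an argument you do not supply. Second, it is worth seeing how the paper sidesteps all of these difficulties: on the rising branch it only needs $F\le0$ below $\beta$ (Step one, via $V_i$); it crosses the layer $(0,\beta)$ on the descending branch with the Serrin--Tang functional $W_i$ rather than with any Pohozaev-type pointwise sign condition; and $(f_4)$ enters exactly once, through the identity \eqref{dif} in Step six, to force $\bar r_1$ and $\bar r_2$ to intersect --- no linearized equation, no zero-counting for $\phi$, and no potential comparison below $B$ is ever needed. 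If you want to salvage the linearization approach, you would need a substitute for the failed inequalities in $(B,\beta)$ and $(0,B)$, and that substitute is not in your sketch.
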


If $b=\infty$, we do not need the superlinearity. So in this case we have

\begin{thm}\label{2}
 If $f$ satisfies $(f_1)$-$(f_3)$, with $B>0$,  then problem \eqref{R} with
 $b=\infty$ has at most one positive solution for every $a$ with
 $0<a<\infty$.
\end{thm}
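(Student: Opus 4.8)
The plan is to argue by contradiction. Suppose \eqref{R} with $b=\infty$ has two distinct positive solutions $u_1,u_2$; each satisfies $u_i(a)=0$, $u_i>0$ on $(a,\infty)$ and $u_i(r)\to0$ as $r\to\infty$. Since a solution is determined by $(u_i(a),u_i'(a))=(0,\alpha_i)$, distinctness forces $\alpha_1\ne\alpha_2$, with $\alpha_i=u_i'(a)>0$. First I would record the shape of any such solution. Multiplying the equation by $r^{n-1}u'$ shows the energy $E(r)=\tfrac12(u')^2+F(u)$ obeys $E'=-\tfrac{n-1}{r}(u')^2\le0$, so $E$ decreases from $E(a)=\tfrac12\alpha^2$ to $E(\infty)=F(0)=0$; hence $E>0$ on $[a,\infty)$, the solution rises to a maximum $M$ with $F(M)=E(r_0)>0$ and therefore $M>\beta$, and then returns monotonically to $0$. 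The hypothesis $B>0$ is what makes this decaying profile possible and, equally important, forces $u_i,u_i'\to0$ fast enough that weighted boundary terms at infinity vanish.

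The central device is the Pohozaev function
\[
P(r)=r^{n-1}\Bigl[r\Bigl(\tfrac12(u')^2+F(u)\Bigr)+\tfrac{n-2}{2}\,uu'\Bigr],\qquad
P'(r)=r^{n-1}\Bigl(nF(u)-\tfrac{n-2}{2}\,uf(u)\Bigr).
\]
Writing $K(s)=nF(s)-\tfrac{n-2}{2}sf(s)=f(s)G(s)$ with $G(s)=n\,F(s)/f(s)-\tfrac{n-2}{2}s$, condition $(f_3)$ gives $G'(s)=n(F/f)'(s)-\tfrac{n-2}{2}\ge0$ for $s>\beta$, while $F(\beta)=0$ yields $G(\beta)=-\tfrac{n-2}{2}\beta\le0$. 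Hence $G$, and with it $K$, changes sign at most once (from $-$ to $+$) on $(\beta,\infty)$, while $K\le0$ on $(B,\beta)$. I would combine this sign information with the boundary values $P(a)=\tfrac{a^n}{2}\alpha^2>0$ and $P(\infty)=0$. The vanishing $P(\infty)=0$ -- which is exactly where the decay at infinity, hence $B>0$, enters -- is the structural feature absent in the annulus case (there $P(b)=\tfrac{b^n}{2}(u'(b))^2>0$), and is the reason the superlinearity $(f_4)$ can be dropped here.

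To turn these one-solution identities into a comparison I would linearize. With $u_\alpha$ the solution of slope $\alpha$ and $\phi=\partial_\alpha u_\alpha$, one has $L\phi=0$, $\phi(a)=0$, $\phi'(a)=1$, where $L\psi:=\psi''+\tfrac{n-1}{r}\psi'+f'(u_\alpha)\psi$; moreover a direct computation gives $Lu_\alpha=u_\alpha f'(u_\alpha)-f(u_\alpha)$ and, for the Pohozaev combination $w=ru_\alpha'+\tfrac{n-2}{2}u_\alpha$, the identity $Lw=-K'(u_\alpha)$. A Sturm comparison between $\phi$ and $w$, together with the single sign change of $K$ and the vanishing of $P$ at infinity, should show that a suitably normalized limit of $P$ (equivalently, the transversality of the decaying profile) is strictly monotone in $\alpha$, so that the decaying solution is attained for at most one value of $\alpha$. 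This contradicts $\alpha_1\ne\alpha_2$ and proves the theorem. For $n\ge3$ the change of variables $t=\int r^{1-n}\,dr$ maps $(a,\infty)$ onto a bounded interval on which both endpoints carry a zero Dirichlet condition, and this is the natural setting in which to carry out the comparison and keep the two endpoints on the same footing.

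The step I expect to be the main obstacle is the analysis at infinity. One must establish sharp enough decay of $u_i,u_i'$ and of the linearized solution $\phi$ to justify $P(\infty)=0$ and the vanishing of every boundary term appearing in the comparison, and one must simultaneously control the outer region where $u<B$ and $f\le0$, where $K$ need not have a favorable sign and the solution has no restoring force. Handling the improper endpoint and the sign-changing nonlinearity on $(0,B)$ at once, using only $(f_1)$--$(f_3)$ together with $B>0$, is the crux of the argument.
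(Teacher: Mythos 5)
Your proposal stops short of a proof at exactly the point where a proof is needed. The first half (the energy/shape analysis, the Pohozaev function $P$ with $P'(r)=r^{n-1}K(u)$, and the sign structure of $K=nF-\tfrac{n-2}{2}sf$ forced by $(f_3)$) is correct, but these are preliminaries; the actual uniqueness mechanism is the sentence ``A Sturm comparison between $\phi$ and $w$ \dots\ \emph{should show} that a suitably normalized limit of $P$ \dots\ is strictly monotone in $\alpha$,'' and this is asserted, not carried out. Moreover, as formulated it has a structural flaw: a statement like ``$P(\infty)$ is monotone in $\alpha$'' presupposes that the quantity is defined for all slopes $\alpha$ between $\alpha_1$ and $\alpha_2$, but for intermediate $\alpha$ the solution of the initial value problem need not remain positive, need not decay, and need not even be global in the relevant sense; so one cannot differentiate ``the limit at infinity'' along the family without first establishing a classification of shooting orbits (crossing / decaying / bounded away from zero) and the nondegeneracy of decaying ones --- which is precisely the content of a McLeod--Serrin type uniqueness proof, not a step in it. You also flag yourself, correctly, that the region $u<B$ (where $f\le 0$, $K$ has no favorable sign, and $w=ru'+\tfrac{n-2}{2}u$ can vanish) and the improper endpoint at infinity are ``the crux''; since the proposal offers no way through either difficulty, the gap is genuine.

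For contrast: the paper never linearizes in $\alpha$ and never invokes Sturm comparison. It takes two putative solutions $u_1,u_2$ with $0<u_1'(a)<u_2'(a)$, passes to the inverse functions $r_i(s)$, $\bar r_i(s)$ of the rising and falling branches, and compares the solutions directly through a chain of functionals in the variable $s=u(r)$: first $V_i(s)=r_i^{2(n-1)}\bigl((u_i'(r_i))^2+2F(s)\bigr)$ below $\beta$, then the Erbe--Tang functional $P_i(s)=-2n\frac{F}{f}\frac{r_i^{n-1}}{r_i'}-\frac{r_i^n}{(r_i')^2}-2r_i^nF$ up to and past the maxima, and finally $W_i(s)=\bar r_i\sqrt{(u_i'(\bar r_i))^2+2F(s)}$ together with $J(s)=|u_1'(\bar r_1)|^2-|u_2'(\bar r_2)|^2$ on the way down. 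The endpoint at infinity is handled not by delicate decay of a linearized solution but by the elementary estimates $r^{n-1}u_i'(r)\to L_i\le 0$ and $ru_i'(r)\to 0$ (Proposition 2.2 of the paper), which suffice to kill the boundary terms in the $J$-inequality and in the final $W$-comparison. This two-solution comparison is what allows the hypotheses to stay at $(f_1)$--$(f_3)$ with $B>0$, with no nondegeneracy or monotone-separation analysis at all; your route, even if it could be completed, would require substantially more machinery precisely in the two places you identified as obstacles.
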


The proofs of our main results will follow after carefully comparing
two such positive solutions, assuming they exist.
We will use functionals found in \cite{et1}, \cite{fls} and \cite{cghy}.

\medskip

In the particular case  $f(s)=s^p+s^q$ from Theorem \ref{main0}  we obtain
the following corollary, which improves (ii) of Theorem C.

\begin{cor}\label{cor1}
 Let $n \ge 2$.
 Assume that $f(s)\equiv s^p+s^q$, where $p>1$ and $p>q>0$.
 Then problem \eqref{R} with $b<\infty$ has at most one positive solution
 for every $a$ and $b$ with $0<a<b<\infty$ if one of the following conditions
 {\rm (i)--(iv)} is satisfied{\rm :}
 \begin{enumerate}
  \item[(i)] $n\ge 6$ and $1\le q<p\le (n+2)/(n-2)${\rm ;}
  \item[(ii)] $2<n<6$ and $4/(n-2)\le q<p\le (n+2)/(n-2)${\rm ;}
  \item[(iii)] $2<n<6$, $0<q<4/(n-2)$ and $q<p\le P(q)$,
	       where
	       \begin{equation*}
	        P(q):=\frac{2(q+1)+\sqrt{(n+2)(q+1)(n+2-(n-2)q)}}{n}{\rm ;}	
	       \end{equation*}
  \item[(iv)] $n=2$ and $0< q<p \le q+1+2\sqrt{q+1}$.
 \end{enumerate}
\end{cor}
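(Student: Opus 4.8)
The plan is to deduce everything from Theorem \ref{main0}: for $f(s)=s^p+s^q$ with $p>1$ and $p>q>0$ I would simply check that hypotheses $(f_1)$--$(f_4)$ hold precisely under (i)--(iv). Three of these are quick. Since $f'(s)=ps^{p-1}+qs^{q-1}$ and $q>0$, $f'$ is integrable near $0$, giving $(f_1)$; since $f>0$ on $(0,\infty)$ we are in the degenerate alternative $\beta=B=0$ of $(f_2)$; and with $B=0$ the superlinearity $(f_4)$ becomes $f(s)\le sf'(s)$, that is $(p-1)s^p+(q-1)s^q\ge0$ for all $s>0$, which (being strict somewhere because $p>1$) holds as soon as $q\ge1$. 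This is automatic in (i) and (ii), where $q\ge 4/(n-2)\ge1$.

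The real content is the subcriticality condition $(f_3)$. Writing $(F/f)'=1-Ff'/f^2$ (using $F'=f$), $(f_3)$ is equivalent to
\begin{equation*}
\frac{F(s)f'(s)}{f(s)^2}\le\frac{n+2}{2n}\qquad\mbox{for all }s>0.
\end{equation*}
I would substitute $t=s^{p-q}$, an increasing bijection of $(0,\infty)$ because $p>q$, and clear the positive factor $(t+1)^2=f(s)^2/s^{2q}$. Expanding $Ff'/s^{2q}$ turns the inequality into the quadratic positivity statement $At^2+Bt+C\ge0$ for all $t>0$, where
\begin{equation*}
A=\frac{(n+2)-(n-2)p}{2n(p+1)},\qquad C=\frac{(n+2)-(n-2)q}{2n(q+1)},
\end{equation*}
and
\begin{equation*}
B=\frac{n+2}{n}-\frac{q}{p+1}-\frac{p}{q+1}.
\end{equation*}

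It then remains to read off when this quadratic is nonnegative on $(0,\infty)$. One has $A\ge0$ precisely when $p\le(n+2)/(n-2)$ (a vacuous condition when $n=2$), and $C\ge0$ precisely when $q\le(n+2)/(n-2)$, which is automatic since $q<p$. If $B\ge0$, the quadratic is a sum of nonnegative terms on $(0,\infty)$ and positivity holds with no extra constraint; since $B$ is concave in $p$ I expect this to be exactly the regime (i)--(ii), where one checks at the endpoints that $q/(p+1)+p/(q+1)\le(n+2)/n$ throughout the stated ranges. If $B<0$, the minimum over $t>0$ sits at the interior vertex $t=-B/(2A)>0$, so positivity is equivalent to the discriminant condition $B^2\le4AC$; solving this for $p$ produces the threshold $p\le P(q)$ of (iii), and for $n=2$, where $A=1/(p+1)$ and $C=1/(q+1)$, the same computation collapses to $p\le q+1+2\sqrt{q+1}$, giving (iv). That the two regimes join is confirmed by the identity $P(4/(n-2))=(n+2)/(n-2)$, matching (ii) with (iii).

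I expect the main obstacle to be the bookkeeping of this quadratic analysis rather than any single hard estimate: one must confirm that $B\ge0$ really holds throughout (i) and (ii), and push the discriminant computation $B^2\le4AC$ far enough to extract the clean radical $P(q)$ and its $n=2$ specialization. A genuinely delicate point is that the sublinear range $0<q<1$ permitted in (iii) and (iv) escapes the elementary verification of $(f_4)$ above, so the superlinearity in that range needs separate justification before Theorem \ref{main0} can be invoked.
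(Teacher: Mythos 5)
Your proposal is correct where it is complete, and it follows the paper's own proof in all essentials: reduce to Theorem \ref{main0}, note that $(f_1)$ and $(f_2)$ are immediate with $\beta=B=0$, verify $(f_4)$ via $(p-1)s^p+(q-1)s^q\ge0$ (valid exactly when $q\ge1$), and convert $(f_3)$ into $F(s)f'(s)/f(s)^2\le\frac{n+2}{2n}$, which the substitution $t=s^{p-q}$ turns into algebra in $(p,q,n)$. The only divergence is bookkeeping: you clear denominators and analyze the quadratic $At^2+Bt+C$ through the sign of $B$ and the discriminant $B^2\le 4AC$, while the paper writes $Ff'/f^2=\frac{p}{p+1}+\frac{p-q}{(p+1)(q+1)}\,h(s^{p-q}+1)$ with $h(t)=\frac{p-q-1}{t}-\frac{p-q}{t^2}$, maximizes $h$, and splits on $p\le q+1$ versus $p>q+1$ rather than on the sign of $B$. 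The two case analyses carve out the same region, but the paper's split makes (i)--(ii) trivial: there $p\le\frac{n+2}{n-2}\le q+1$, so one is in the monotone case and only needs $p/(p+1)\le\frac{n+2}{2n}$, whereas your route requires the corner/convexity verification that $B\ge0$. Conversely, your discriminant computation for (iii)--(iv) is the same algebra as the paper's passage between $p\le P(q)$ and its condition $\frac{(p+q+1)^2}{2(p+1)(q+1)}\le\frac{n+2}{n}$, including a point you glossed over: one must also show that $p\le P(q)$ forces $p\le\frac{n+2}{n-2}$, so that $A\ge0$ (the paper proves exactly this claim).

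Your closing ``delicate point'' is not a defect of your write-up relative to the paper: it is a genuine gap in the paper itself. For $0<q<1$, condition $(f_4)$ with $B=\beta=0$ reads $(p-1)s^p\ge(1-q)s^q$ for all $s>0$, and this fails as $s\to0^+$; so $(f_4)$ is not merely harder to verify, it is false, and Theorem \ref{main0} cannot be invoked at all. (Its proof uses $(f_4)$, i.e.\ the monotonicity of $f(s)/s$, precisely in Step Six, Proposition \ref{Intersec}.) The paper tacitly concedes this by opening its proof of the corollary with the assumption $1\le q<p$, even though the statement allows all $q>0$; thus the published argument, like yours, covers (iii) and (iv) only for $q\ge1$, and the subrange $0<q<1$ would require a substitute for Step Six that does not rely on $f(s)/s$ being nondecreasing.
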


For existence of solutions in any annulus when $q>1$ we refer for example to \cite[Theorem B(viii)]{garai} and when $0<q\le 1<p$ to
\cite[Theorem B(vii)]{garai}, which gives existence if $b-a<C$ for some constant $C$ (and nonexistence if $b-a>C$).

Finally, the following result is an application of our results to the special case $f(s)=s^p-s^q$.

\begin{cor}\label{cor2}
 Let $n\ge 2$ and assume that $f(s)\equiv s^p-s^q$, $p>q>0$.
 Then problem \eqref{R} with $b=\infty$ has at most one positive solution
 for every $a \in (0,\infty)$ if one of the following conditions
 {\rm (i)--(iii)} is satisfied{\rm :}
 \begin{enumerate}
  \item[(i)] $n>2$, $0<q\le 4/(n-2)$ and $q<p\le (n+2)/(n-2)${\rm ;}
  \item[(ii)] $n>2$ and
	       \begin{equation*}
                \frac{4}{n-2} <q<\frac{4+\sqrt{2n(n+2)}}{2(n-2)}, \quad
	        q<p\le P(q),
	       \end{equation*}
	      where $P(q)$ is defined as in Corollary \ref{cor1}{\rm ;}
  \item[(iii)] $n=2$ and $0<q<p$.
 \end{enumerate}
 Moreover, if $p>1$ and one of {\rm (i)--(iii)} is satisfied, then
 problem \eqref{R} with $b<\infty$ has at most one positive solution
 for every $a$ and $b$ with $0<a<b<\infty$.

\end{cor}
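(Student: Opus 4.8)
The plan is to obtain Corollary \ref{cor2} directly from Theorems \ref{2} and \ref{main0}, so that the whole problem reduces to checking that $f(s)=s^p-s^q$ satisfies $(f_1)$--$(f_4)$ in each of the three regions. Writing $f(s)=s^q(s^{p-q}-1)$ shows at once that $f(0)=0$, that $f<0$ on $(0,1)$ and $f>0$ on $(1,\infty)$, so $B=1$; and since $q>0$, $f'(s)=ps^{p-1}-qs^{q-1}\in L^1(0,1)$, which is $(f_1)$. With $F(s)=\frac{s^{p+1}}{p+1}-\frac{s^{q+1}}{q+1}$, the equation $F(\beta)=0$ has the unique positive root $\beta=\left(\frac{p+1}{q+1}\right)^{1/(p-q)}>1=B$, so $(f_2)$ holds with $B>0$. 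Hence Theorem \ref{2} will give the $b=\infty$ conclusion as soon as $(f_3)$ is verified, and it is $(f_3)$ that will produce the regions (i)--(iii).

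For $(f_3)$, I use the identity $(F/f)'=1-\frac{Ff'}{f^2}$, so that on $(\beta,\infty)$, where $f>0$, the condition is equivalent to $2nFf'-(n+2)f^2\le0$. Substituting the explicit $F,f,f'$ and factoring out $s^{2q}$, I set $w=s^{p-q}$, which runs over $(w_0,\infty)$ with $w_0=\beta^{p-q}=\frac{p+1}{q+1}$; this turns $(f_3)$ into the single quadratic inequality
\[
 Q(w):=Aw^2+C_1w+C_2\le0,\qquad w>w_0,
\]
with $A=\frac{(n-2)p-(n+2)}{p+1}$, $C_2=\frac{(n-2)q-(n+2)}{q+1}$ and $C_1=2(n+2)-2n\left(\frac{p}{q+1}+\frac{q}{p+1}\right)$. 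Since $F(\beta)=0$, one gets $Q(w_0)=-(n+2)f(\beta)^2\beta^{-2q}<0$, so the inequality is automatic at the left endpoint, while letting $w\to\infty$ forces $A\le0$, that is $p\le\frac{n+2}{n-2}$; equivalently $(F/f)'(s)\to\frac{1}{p+1}$ as $s\to\infty$, and this limit is $\ge\frac{n-2}{2n}$ exactly when $p\le\frac{n+2}{n-2}$.

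With $A\le0$ the parabola opens downward and $Q(w_0)<0$, so $Q\le0$ on $(w_0,\infty)$ holds precisely when $p\le\min\{\frac{n+2}{n-2},\,P(q)\}$, where $p=P(q)$ is the locus on which the discriminant $C_1^2-4AC_2$ vanishes. A computation gives $P\!\left(\frac{4}{n-2}\right)=\frac{n+2}{n-2}$, and $P(q)\ge\frac{n+2}{n-2}$ for $q\le\frac{4}{n-2}$ while $P(q)\le\frac{n+2}{n-2}$ for $q\ge\frac{4}{n-2}$; hence for $q\le\frac{4}{n-2}$ the binding bound is $p\le\frac{n+2}{n-2}$ (region (i)), while for $q>\frac{4}{n-2}$ it is the tighter $p\le P(q)$ (region (ii)). The upper limit $q<\frac{4+\sqrt{2n(n+2)}}{2(n-2)}$ in (ii) is the value at which $P(q)=q$, below which the range $q<p\le P(q)$ is nonempty. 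When $n=2$ one has $\frac{n-2}{2n}=0$, $A=\frac{-4}{p+1}<0$, $C_2=\frac{-4}{q+1}<0$, and the condition that the vertex of $Q$ lie at or left of $w_0$ reduces to $(p-q)(p-q+1)\ge0$, which always holds; thus $(f_3)$ is satisfied for every $p>q>0$, giving region (iii).

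It remains to get the sharper $b<\infty$ statement through Theorem \ref{main0}, which needs the superlinearity $(f_4)$: $f(s)\le f'(s)(s-1)$ for $s>1$, not identically equal on $[1,\beta]$. Setting $g(s):=f'(s)(s-1)-f(s)=(p-1)s^p-ps^{p-1}-(q-1)s^q+qs^{q-1}$, one checks $g(1)=g'(1)=0$ and $g''(1)=(p-q)(p+q-1)$; for $p>1$ a factorization shows $g''>0$ on $[1,\infty)$, so the strict convexity of $g$ together with $g(1)=g'(1)=0$ forces $g>0$ on $(1,\infty)$, which is $(f_4)$. The requirement $p>1$ is dictated by the leading behavior $g(s)\sim(p-1)s^p$. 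Applying Theorem \ref{main0} then completes the proof. I expect the genuine obstacle to be the resolution of the quadratic inequality for $Q$ on $(w_0,\infty)$: showing that, within the downward regime $A\le0$, the exact binding constraint is $p\le\min\{\frac{n+2}{n-2},P(q)\}$ --- i.e.\ that no extra portion of the $(p,q)$-plane is admitted once the discriminant becomes positive --- is where the entire computation, and the appearance of $P(q)$ and of the threshold $\frac{4+\sqrt{2n(n+2)}}{2(n-2)}$, is concentrated.
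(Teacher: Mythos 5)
Your overall strategy is the paper's: verify $(f_1)$--$(f_4)$ for $f(s)=s^p-s^q$ and quote Theorems \ref{2} and \ref{main0}, with all the real work in $(f_3)$. Your algebra is also consistent with the paper's up to a change of variables: writing $x=w-1$, your quadratic becomes $Q=Ax^2-2nax-2nb$ with $a=\frac{(p-q)(p-q-1)}{(p+1)(q+1)}$, $b=\frac{(p-q)^2}{(p+1)(q+1)}$, and its discriminant vanishes exactly on $np^2-4(q+1)p+n(q+1)^2-2(n+2)(q+1)=0$, i.e.\ at $p=P_{\pm}(q)$, so your identification of the discriminant locus with $P(q)$ is correct. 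The genuine gap is your central claim that, for $A\le 0$ and $Q(w_0)<0$, ``$Q\le0$ on $(w_0,\infty)$ holds precisely when $p\le\min\{\frac{n+2}{n-2},P(q)\}$.'' That is false: a downward parabola with positive discriminant violates $Q\le 0$ only \emph{between} its two roots, and these need not meet $(w_0,\infty)$. Indeed, when $p\ge q+1$ one has $a\ge0$, so the vertex $x=na/A$ lies in $x\le 0$ while $Q|_{x=0}=-2nb<0$; hence both roots (when real) are negative and $Q<0$ on all of $(w_0,\infty)$ no matter what the discriminant does. So $(f_3)$ can perfectly well hold with $p>P(q)$: for $n=3$, $q=0.1$, $p=5$ one has $p\ge q+1$, hence $(F/f)'\ge\frac{1}{p+1}=\frac16=\frac{n-2}{2n}$ and $(f_3)$ holds, yet $P(0.1)=\frac{2.2+\sqrt{5\cdot 1.1\cdot 4.9}}{3}\approx 2.46<5$.

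You then compensate with a second false claim, that $P(q)\ge\frac{n+2}{n-2}$ for $q\le\frac{4}{n-2}$. The truth is the opposite (see the Remark following Corollary \ref{cor2}): $P$ is increasing on $(0,4/(n-2))$ up to $P(4/(n-2))=\frac{n+2}{n-2}$, so $P(q)<\frac{n+2}{n-2}$ on that interval (for $n=3$, $P(q)\to\frac{7}{3}$ as $q\to0$, while $\frac{n+2}{n-2}=5$). The two errors cancel to produce the stated regions, but as a proof your argument only establishes uniqueness for $p\le\min\{\frac{n+2}{n-2},P(q)\}$, which for $q<\frac{4}{n-2}$ omits the entire strip $P(q)<p\le\frac{n+2}{n-2}$ of region (i). The repair is exactly the case split the paper makes (and which you yourself make when $n=2$ via the vertex position, but abandon for $n>2$): if $p\ge q+1$, all non-constant terms of $(F/f)'=\frac{1}{p+1}+\frac{a}{t-1}+\frac{b}{(t-1)^2}$ are nonnegative for $t>1$, so only $A\le0$, i.e.\ $p\le\frac{n+2}{n-2}$, is needed; if $q<p<q+1$, the vertex lies in $x>0$ and the discriminant condition $P_-(q)\le p\le P(q)$ is the right criterion (note you must also check $p\ge P_-(q)$, which you never do; it follows from $P_-(q)\le q<p$). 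Regions (i) and (ii) then assemble from the two cases using $P(q)\ge q+1\iff q\le\frac{4}{n-2}$. Your $n=2$ and $(f_4)$ verifications are essentially fine and essentially the paper's.
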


\noindent{\em Remark.} It is worth noting that
\begin{equation*}
 P(q)=\frac{2(q+1)+\sqrt{(n+2)(q+1)(n+2-(n-2)q)}}{n}
\end{equation*}
is increasing with respect to $q \in (0,4/(n-2))$ and
is decreasing with respect to $q \in (4/(n-2),(4+\sqrt{2n(n+2)})/(2(n-2)))$.
Hence we have
\begin{equation*}
 P(q) \ge \frac{n+4}{2} \quad \mbox{for} \ q \in (0,4/(n-2))
\end{equation*}
and
\begin{equation*}
 P(q) \ge \frac{4+\sqrt{2n(n+2)}}{2(n-2)} \quad \mbox{for} \
 q \in \left(\frac{4}{n-2},\frac{4+\sqrt{2n(n+2)}}{2(n-2)}\right).
\end{equation*}
For existence of solutions in any annulus in this case we refer to \cite[Theorem B(iv)]{garai}.

\bigskip

Our paper is organized as follows. In Section \ref{prel} we prove some basic properties of the solutions, then in Sections 3 we prove some comparison results that are used to prove first Theorem \ref{main0} and in Section 4 we prove Theorem \ref{2}.
Finally, in Section \ref{proofofcor} we give the proofs of Corollaries \ref{cor1}--\ref{cor2}.

\section{Preliminaries}\label{prel}

The aim of this section is to establish several properties of positive
solutions of \eqref{R}.

\begin{prop}\label{1}
 Assume that $f$ satisfies $(f_1)$ and $(f_2)$.
 Let $u$ be a positive solution of \eqref{R}.
 Then the following  hold{\rm :}
 \begin{enumerate}
  \item[(i)] $(u'(r))^2+2F(u(r))>0$ for $r \in [a,b)${\rm ;}
  \item[(ii)] There exists a number $c \in (a,b)$ such that $u'(r)>0 $
	      for $r \in [a,c)$ and $u'(r)<0$ for $r \in (c,b)$.
	      Moreover, $u(c)>\beta$.
 \end{enumerate}
\end{prop}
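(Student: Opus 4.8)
The plan is to analyze the ODE in \eqref{R} using the natural energy functional and the structural conditions $(f_1)$--$(f_2)$. Let $u$ be a positive solution, so $u(a)=u(b)=0$ and $u(r)>0$ on $(a,b)$. Define the energy
\[
E(r)=\frac{1}{2}(u'(r))^2+F(u(r)),\qquad F(s)=\int_0^s f(t)\,dt.
\]
Differentiating along solutions and using the equation gives
\[
E'(r)=u'(r)\Bigl(u''(r)+f(u(r))\Bigr)=-\frac{n-1}{r}(u'(r))^2\le 0,
\]
so $E$ is nonincreasing on $[a,b)$. This monotonicity, together with control of $E$ at the endpoint $r=a$, will drive both parts of the proposition.

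For part~(i), I would first evaluate $E$ at $r=a$: since $u(a)=0$ and $F(0)=0$, we have $E(a)=\tfrac12(u'(a))^2\ge 0$. If $u'(a)=0$ then $u\equiv 0$ by uniqueness of the initial value problem (using $(f_1)$ to get local Lipschitz-type behavior, or the integrability $f'\in L^1(0,1)$ to handle the origin), contradicting positivity; hence $E(a)>0$. The quantity $(u'(r))^2+2F(u(r))=2E(r)$, so the claim is exactly $E(r)>0$ on $[a,b)$. Since $E$ is nonincreasing, the danger is that $E$ could decrease to $0$ before reaching $b$. I would rule this out by a contradiction argument: if $E(r_0)=0$ at some first $r_0\in(a,b)$, then at $r_0$ both $u'(r_0)=0$ and $F(u(r_0))=0$; analyzing the possible values $u(r_0)$ can take (using $(f_2)$, which pins down where $F$ vanishes, namely at $0$ and at $\beta$ in the sign-changing case) together with $E'\le 0$ and the flow of the ODE should force $u$ to be constant or to contradict the boundary/positivity conditions. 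This local-uniqueness-at-a-critical-point step is where I expect the main technical obstacle, since it requires handling both the $f>0$ case and the sign-changing $f$ case of $(f_2)$ simultaneously, and near values where $f$ may vanish.

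For part~(ii), the existence of a unique interior maximum follows from the strict concavity/convexity structure encoded in the equation. Since $u>0$ on $(a,b)$ and $u(a)=u(b)=0$, $u$ attains a positive interior maximum at some $c\in(a,b)$ with $u'(c)=0$. I would show $c$ is the \emph{only} critical point by arguing that $u'$ cannot vanish again: at any critical point $r$ with $u'(r)=0$ the equation gives $u''(r)=-f(u(r))$, so the sign of $u''$ at a critical point is determined by the sign of $f(u(r))$. Using $(f_2)$, which gives $f(s)>0$ for $s>B$ and $f\le 0$ on $[0,B]$, I can show that $u(c)>\beta$ must hold: evaluating the energy at the maximum gives $F(u(c))=E(c)>0$ (from part~(i) applied at $c$, since $u'(c)=0$), and because $\int_0^\beta f=0$ forces $F(\beta)=0$ with $F<0$ on $(0,\beta)$ and $F$ increasing past $\beta$, the inequality $F(u(c))>0$ yields $u(c)>\beta$. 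Then every interior critical point lies in the region $u>\beta>B$ where $f>0$, so $u''<0$ there, meaning each critical point is a strict local maximum; two distinct strict local maxima of a $C^2$ function on an interval would force an intervening local minimum, which would be another critical point where $u''\ge 0$, contradicting $u''<0$. Hence $c$ is unique, $u'>0$ on $[a,c)$ and $u'<0$ on $(c,b)$, completing the proof.
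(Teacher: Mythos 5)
Your part (ii) is essentially the paper's own argument and is sound once (i) is available: at any critical point $c$, part (i) gives $F(u(c))>0$, hence $u(c)>\beta$ by the sign structure of $F$ under $(f_2)$, hence $u''(c)=-f(u(c))<0$, and two strict local maxima would force an intervening critical point with $u''\ge 0$. The genuine gap is in part (i). Your ``first zero of $E$'' argument contains a false step: from $E(r_0)=0$ you conclude that \emph{both} $u'(r_0)=0$ and $F(u(r_0))=0$, but under $(f_2)$ the function $F$ is negative on $(0,\beta)$, so $E(r_0)=0$ is perfectly compatible with $F(u(r_0))<0$ and $(u'(r_0))^2=-2F(u(r_0))>0$; nothing forces the two terms to vanish separately. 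In addition, your preliminary claim that $u'(a)=0$ would imply $u\equiv 0$ ``by uniqueness of the initial value problem'' is not justified by $(f_1)$: the condition $f'\in L^1(0,1)$ makes $f$ absolutely continuous but not Lipschitz at $0$, and for square-root-like behavior of $f$ near $0$ (which is admissible, e.g.\ $f(s)=-\sqrt{s}$ near $0$) zero initial data need not determine the solution, exactly as for $u''=\sqrt{u}$. You yourself flag the interior step as ``the main technical obstacle,'' and indeed as written the argument does not close.

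The idea you are missing is to anchor the monotone energy at the \emph{right} endpoint rather than at $a$. Since $u(b)=0$ and $F(0)=0$, the nonincreasing function $I(r)=(u'(r))^2+2F(u(r))$ satisfies
\begin{equation*}
I(r)\;\ge\; I(b)\;\ge\; 2F(u(b))\;=\;0 \quad \mbox{for all } r\in[a,b),
\end{equation*}
with no uniqueness theorem and no case analysis on $(f_2)$ needed. Now if $I(r_1)=0$ for some $r_1\in[a,b)$, monotonicity together with this lower bound forces $I\equiv 0$ on $[r_1,b)$, hence $I'\equiv 0$ there, and the identity $I'(r)=-\tfrac{2(n-1)}{r}(u'(r))^2$ gives $u'\equiv 0$ on $(r_1,b)$; thus $u$ is constant there, equal to $u(b)=0$, contradicting $u>0$ on $(a,b)$. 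This is the paper's proof of (i); it treats the interior points and the endpoint $r_1=a$ uniformly, and it is precisely the step your forward-propagation scheme from $r=a$ cannot reproduce.
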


\begin{proof}
 Set $I(r):=(u'(r))^2+2F(u(r))$.
 Then
 \begin{equation}
  I'(r) = - \frac{2(n-1)}{r} (u'(r))^2 \le 0, \quad r \in (a,b).
   \label{I'}
 \end{equation}
 Therefore $I(r)$ is nonincreasing on $(a,b)$, which implies that
 \begin{equation*}
  I(r) \ge I(b) \ge 2F(u(b)) = 2F(0) = 0, \quad r \in [a,b).
 \end{equation*}
 Now we will prove (i).
 Assume to the contrary that $I(r_1)=0$ for some $r_1 \in [a,b)$.
 Then $I(r)=0$ for $r \in [r_1,b)$ and hence $I'(r)=0$ for $r \in (r_1,b)$.
 From \eqref{I'} it follows that $u'(r)=0$ for $r \in (r_1,b)$, which
 means that $u(r)=u(b)=0$ for $r \in (r_1,b)$.
 This contradicts the fact that $u(r)>0$ for $r \in (a,b)$.
 Consequently, (i) follows.

 \noindent (ii) If $u'(r_0)=0$ we have $0<I(r_0)=F(u(r_0))$, which implies that
 $u(r_0)>\beta\ge B$.
 From the equation in \eqref{R}, it follows that $u''(r_0)=-f(u(r_0))<0$.

Since $u(r)>0$ for $r \in (a,b)$ and $u(a)=u(b)=0$, there exists a
 number $c \in (a,b)$ such that $u'(r)>0$ for $r \in [a,c)$ and $u'(c)=0$.
 Now we claim that $u'(r) \ne 0$ for $r \in (c,b)$.
 Assume that $u'(d)=0$ for some $d \in (c,b)$.
 Then $u''(c)<0$ and $u''(d)<0$, which means that there exists
 $r_1 \in (c,d)$ such that $u'(r_1)=0$ and $u''(r_1) \ge 0$.
 This is a contradiction.
 Hence $u'(r) \ne 0$ for $r \in (c,b)$ as claimed.
 Since $u(c)>0$ and $u(b)=0$, we have $u'(r)<0$ for $r \in (c,b)$.
Finally, we observe that as $u'(c)=0$, then from (i) $0<I(c)=F(u(c))$, which implies that
$u(c)>\beta\ge B$.
%From the equation in \eqref{R}, it follows that $u''(r_0)=-f(u(r_0))<0$.We have already %shown that $u(c)>\beta$ by $u'(c)=0$.
 Therefore (ii) follows.
\end{proof}

\begin{prop}\label{u'->0}
	Assume that $f(s)$ satisfies $(f_1)$, $(f_2)$ and $b=\infty$.
	Let $u$ be a positive solution of \eqref{R}.
	Then
	
	\begin{enumerate}
	\item [(i)]
	\begin{equation*}
	\lim_{r\to\infty} r^{n-1}u'(r) =L,
	\end{equation*}
	for some $L \in (-\infty,0]$.
	\item [(ii)]
	\begin{equation*}
	\lim_{r\to\infty} ru'(r) =0.
	\end{equation*}
\end{enumerate}	
\end{prop}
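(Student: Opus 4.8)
The plan is to put the radial equation into divergence form and study the monotonicity of $g(r):=r^{n-1}u'(r)$. Multiplying the equation in \eqref{R} by $r^{n-1}$ gives
\[
(r^{n-1}u'(r))' = -r^{n-1}f(u(r)), \qquad r\in(a,\infty).
\]
By Proposition \ref{1}(ii), $u$ is strictly decreasing on $(c,\infty)$ and positive there; since $b=\infty$, the boundary condition forces $u(r)\to 0$ as $r\to\infty$. Because $B>0$ is assumed whenever $b=\infty$, and $f(s)\le 0$ for $s\in[0,B]$ by $(f_2)$, there is a radius $R>c$ with $0<u(r)\le B$, and hence $f(u(r))\le 0$, for all $r\ge R$.

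For part (i) I would combine these facts. On $(R,\infty)$ we have $g'(r)=-r^{n-1}f(u(r))\ge 0$, so $g$ is nondecreasing there; on the other hand $u'(r)<0$ on $(c,\infty)$ forces $g(r)<0$. A nondecreasing function bounded above by $0$ converges, so $\lim_{r\to\infty}g(r)=L$ exists with $L\in[g(R),0]\subset(-\infty,0]$. This is exactly the assertion of (i).

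For part (ii) I would write $ru'(r)=g(r)/r^{n-2}$. When $n\ge 3$ the conclusion is immediate: $g(r)\to L$ is finite while $r^{n-2}\to\infty$, so $ru'(r)\to 0$. The delicate case, and the main obstacle, is $n=2$: there $ru'(r)=g(r)\to L$, and one must rule out $L<0$. I would argue by contradiction. If $L<0$, then for $r$ large enough one has $u'(r)\le \dfrac{L}{2r}<0$; integrating from a fixed large $R_0$ yields
\[
u(r)\le u(R_0)+\frac{L}{2}\,\ln\frac{r}{R_0}\longrightarrow -\infty,
\]
contradicting $u(r)>0$. Hence $L=0$ and $ru'(r)\to 0$, proving (ii). The only genuine subtlety is this logarithmic estimate forcing $L=0$ in dimension two; the rest is a monotonicity-and-boundedness argument applied to $g$.
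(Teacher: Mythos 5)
Your proof is correct and follows essentially the same route as the paper: both write the equation in divergence form to get monotonicity of $r^{n-1}u'(r)$ once $u$ drops below $B$, and both handle $n=2$ by the same logarithmic integration ruling out $L<0$. The only (harmless) difference is in part (i): you bound $L\le 0$ directly from $u'<0$ via Proposition \ref{1}(ii), whereas the paper first allows $L\in(-\infty,\infty]$ and then excludes $L>0$ by noting $u$ would be eventually increasing, contradicting $u>0$ and $u\to 0$.
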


\begin{proof}
(i)
	Since $u(r) \to 0$ as $r \to \infty$, then $B>0$.  By condition $(f_2)$, there
	exists $r_0>a$ such that $f(u(r))<0$ for $r \ge r_0$.
	Since $(r^{n-1}u')'=-r^{n-1}f(u)$, we find that $r^{n-1}u'(r)$
	is increasing for $r \ge r_0$, thus it converges to some
	$L \in (-\infty,\infty]$ as $r\to\infty$.
	If $L\in(0,\infty]$, then $r^{n-1}u'(r)$ is eventually positive,
	and hence $u(r)$ is eventually increasing.
	This contradicts the fact that $u(r)>0$ on $(a,\infty)$ and
	$u(r)\to0$ as $r\to\infty$.
	Therefore, $L \in (-\infty,0]$.\\
	
\noindent (ii) 	
	For $n\geq3$ we have $$\lim_{r\to\infty} ru'(r) =\lim_{r\to\infty}r^{-(n-2)} r^{n-1}u'(r)=0\cdot L=0.$$
	For $n=2$ we proved above that   $$\lim_{r\to\infty} ru'(r)=L\in (-\infty,0].$$
	Suppose $L<0$. Then there exists $r_0>a$ such that, for $r>r_0$, $ru'(r)<L/2$. Hence
	
	$$u(r)= u(r_0)+ \int_{r_0}^r u'(t)\,dt<u(r_0)+\frac{ L}{2}\int_{r_0}^r \frac{1}{t}\,dt =u(r_0)+\frac{ L}{2}(\ln(r)-\ln(r_0)).$$ We can choose $r$ sufficiently large so $u(r)<0$, a contradiction, and thus $L=0$ when $n=2$.

\end{proof}

\section{Proof of Theorem \ref{main0}}

To prove Theorem \ref{main0}, we assume that \eqref{R}, with $b<\infty$,  has two distinct
positive solutions $u_1$ and $u_2$ with  $0<u_1'(a)<u_2'(a)$.
Comparing two such solutions carefully, as $r$ goes  from $a$ to $b$, will lead to
a contradiction, proving the theorem.

 We will assume that  $f$ satisfies $(f_1)$--$(f_3)$ for  Steps one through five.  For Step six and the Final Step we will assume also $(f_4)$, the superlinearity, to guarantee the existence of an
intersection point.

By Proposition \ref{1}, there exists $c_i$ such that
$u_i'(r)>0$ for $r \in [a,c_i)$ and $u_i'(r)<0$ for $r \in (c_i,b)$.
Denote by $r_i(s)$ and $\bar r_i(s)$ inverse functions of $u_i(r)$ on
$[a,c_i)$ and $(c_i,b)$, respectively.
Define
\begin{equation*}
 M_i:=u_i(c_i), \quad i=1,2.
\end{equation*}
and remember, by Proposition \ref{1}, that
\begin{equation*}
 M_i>\beta, \quad i=1,2.
\end{equation*}
We will use the following relation:
\begin{equation*}
 r_i'(s) = \frac{1}{u_i'(r_i(s))}, \quad
 \bar r_i'(s) = \frac{1}{u_i'(\bar r_i(s))}, \quad i=1,2,
\end{equation*}
where $'$ represents the derivative of the function with respect to its natural variable, that is, $r_i'$ stands for the derivative of $r_i$ with respect to $s$, and $u_i'$ denotes the derivative of $u_i$ with respect to $r$. Then
\begin{equation*}
 \frac{1}{r_i'(M_i)} = u_i'(r_i(M_i)) = u_i'(c_i) = 0
\end{equation*}
and similarly
\begin{equation*}
 \frac{1}{\bar r_i'(M_i)} = 0.
\end{equation*}

For our first step we will use the functional
$$
 V_i(s) := (r_i(s))^{2(n-1)} \Bigl( (u_i'(r_i(s)))^2 + 2F(s) \Bigr).
 \quad i=1,2.
$$
The functional $\sqrt{V_i(s)}$ was used first in \cite{cghy}.

\begin{prop} \label{subida}{\bf (Step one)}
 Assume that $f$ satisfies $(f_1)$ and $(f_2)$.
 Let $u_1$ and $u_2$ be two positive solutions of
 \eqref{R} with $0<u_1'(a)<u_2'(a)$. Then
 \begin{equation}
 \frac{(r_1(\beta))^{n-1}}{r_1'(\beta)} <
 \frac{(r_2(\beta))^{n-1}}{r_2'(\beta)},
 \label{atbeta}
 \end{equation}
 and if $\beta >0$, then  $r_1(s)> r_2(s)$ for $s \in (0,\beta]$.
\end{prop}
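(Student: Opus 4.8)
The plan is to dispose of the degenerate case $\beta=0$ first and then concentrate on $\beta>0$. When $\beta=0$ one has $F>0$ on $(0,\infty)$, the ordering assertion is vacuous, and \eqref{atbeta} reads $a^{n-1}u_1'(a)<a^{n-1}u_2'(a)$, which is exactly the hypothesis $u_1'(a)<u_2'(a)$ after using $r_i(0)=a$ and $r_i'(0)=1/u_i'(a)$. So assume $\beta>0$; then by $(f_2)$ we have $F(s)<0$ on $(0,\beta)$ and $F(\beta)=0$. Writing $p_i(s):=u_i'(r_i(s))=1/r_i'(s)>0$ on the ascending branch, one has $V_i(s)=(r_i(s))^{2(n-1)}\bigl(p_i(s)^2+2F(s)\bigr)$, so $V_i(\beta)=\bigl((r_i(\beta))^{n-1}/r_i'(\beta)\bigr)^2$ because $F(\beta)=0$, and $V_i(0)=a^{2(n-1)}(u_i'(a))^2$. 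Since both sides of \eqref{atbeta} are positive, \eqref{atbeta} is \emph{equivalent} to $V_1(\beta)<V_2(\beta)$, and we already know $V_1(0)<V_2(0)$.

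Next I would record the differential identity for $V_i$. Differentiating $V_i$ and using the equation in \eqref{R} in the form $u_i''=-\frac{n-1}{r}u_i'-f(u_i)$ (equivalently differentiating $P_i(r):=r^{2(n-1)}((u_i')^2+2F(u_i))$, whose energy factor satisfies $(\,(u_i')^2+2F(u_i)\,)'=-\frac{2(n-1)}{r}(u_i')^2$), the $f$-terms cancel and one obtains the clean formula
\[
V_i'(s)=2F(s)\,\frac{d}{ds}\bigl((r_i(s))^{2(n-1)}\bigr).
\]
Hence, with $\phi(s):=(r_1(s))^{2(n-1)}-(r_2(s))^{2(n-1)}$, we get $(V_1-V_2)'=2F\phi'$.

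The heart of the proof is a \emph{simultaneous} continuity (bootstrap) argument for the two properties (P1) $r_1(s)>r_2(s)$ and (P2) $V_1(s)<V_2(s)$ on $(0,\beta]$. Both hold for small $s>0$: (P2) by continuity since $V_1(0)<V_2(0)$, and (P1) since $g(s):=r_1(s)-r_2(s)$ vanishes at $0$ with $g'(0)=1/u_1'(a)-1/u_2'(a)>0$. Let $s^*\le\beta$ be a first point at which one of the inequalities becomes an equality, so both are strict on $(0,s^*)$. The key observation is that on $(0,s^*)$ these two properties force a third: using $V_i=(r_i)^{2(n-1)}(p_i^2+2F)$ together with the positivity $p_i^2+2F=I_i(r_i(s))>0$ from Proposition \ref{1}(i), the inequalities $r_1>r_2$ and $V_1<V_2$ give $p_1^2+2F=V_1/(r_1)^{2(n-1)}<V_2/(r_2)^{2(n-1)}=p_2^2+2F$, hence (P3) $p_1<p_2$. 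Then on $(0,s^*)$ one has $g'=1/p_1-1/p_2>0$, so $g$ increases strictly from $g(0)=0$ and stays positive; and since $r_1>r_2$ and $p_1<p_2$ yield $\phi'=2(n-1)\bigl((r_1)^{2n-3}/p_1-(r_2)^{2n-3}/p_2\bigr)>0$ while $F<0$ on $(0,\beta)$, we get $(V_1-V_2)'=2F\phi'<0$, so $V_1-V_2$ decreases strictly from the negative value $V_1(0)-V_2(0)$ and stays negative. Thus both (P1) and (P2) remain strict up to and including $s^*$, contradicting the assumed equality. Therefore no such $s^*$ exists, both hold on all of $(0,\beta]$, and in particular $V_1(\beta)<V_2(\beta)$ (which is \eqref{atbeta}) together with $r_1(s)>r_2(s)$ on $(0,\beta]$.

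I expect the main obstacle to be the circularity built into the three inequalities: none of (P1), (P2), (P3) propagates by itself, and a naive integral approach fails because $\int_0^{s}f\phi$ has genuinely indefinite sign on $(0,B)$ where $f<0$. The decisive point is the implication (P1)$\wedge$(P2)$\Rightarrow$(P3), which closes the loop and makes the continuity argument self-reinforcing, and getting the direction of every inequality right—especially that $F<0$ on $(0,\beta)$ converts the monotonicity $\phi'>0$ into the desired monotonicity $(V_1-V_2)'<0$—is where care is needed. The endpoint bookkeeping at $s=\beta$ and the $\beta=0$ reduction are then routine.
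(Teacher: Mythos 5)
Your proof is correct and follows essentially the same route as the paper: the same functional $V_i$, the same derivative identity $V_i'(s)=4(n-1)(r_i(s))^{2n-3}r_i'(s)F(s)$, the energy positivity from Proposition \ref{1}(i), the sign of $F$ on $(0,\beta)$, and a first-failure-point continuity argument. The only difference is organizational: you run a single simultaneous bootstrap on $r_1>r_2$ and $V_1<V_2$ (closing the loop via the implication to $r_1'>r_2'$), whereas the paper packages that implication as a separate lemma (Lemma \ref{V<V}) and then runs the continuity argument on $V_1<V_2$ alone.
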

 To prove Proposition \ref{subida} we will first prove the following lemma.

 \begin{lema}\label{V<V}
 Assume that $f$ satisfies $(f_1)$ and $(f_2)$.
 Suppose that $\beta>0$ and let $u_1$ and $u_2$ be two positive solutions of
 \eqref{R} with $0<u_1'(a)<u_2'(a)$.
 If there exists $s_0 \in (0,\min\{M_1,M_2\})$ such that
 $V_1(s)\le V_2(s)$ for $s \in [0,s_0]$, then $r_1(s)>r_2(s)$ and
 $r_1'(s)>r_2'(s)$ for $s \in (0,s_0]$.
\end{lema}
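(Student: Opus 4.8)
The plan is to reparametrize the increasing branch by the height $s=u$, introducing the slope function $p_i(s):=u_i'(r_i(s))$, which is strictly positive on $(0,s_0]$ because $s_0<M_i$ keeps $r_i(s)$ in $[a,c_i)$. With this notation $r_i'(s)=1/p_i(s)$, and the definition of $V_i$ rearranges to the purely algebraic identity
\[
 p_i(s)^2 = \frac{V_i(s)}{(r_i(s))^{2(n-1)}} - 2F(s), \qquad i=1,2 .
\]
This expresses $r_i'(s)=1/p_i(s)$ through $V_i$ and $r_i$ alone. The strategy is to show that the hypothesis $V_1\le V_2$, once paired with an ordering $r_1>r_2$, forces $p_1<p_2$ and hence $r_1'>r_2'$, and then to bootstrap the initial ordering of the slopes into the ordering on the whole interval by a continuation argument.

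First I would record the two inputs that drive the comparison. At $s=0$ we have $r_1(0)=r_2(0)=a$ while $r_i'(0)=1/u_i'(a)$, so the assumption $0<u_1'(a)<u_2'(a)$ gives $r_1'(0)>r_2'(0)$, whence $r_1>r_2$ on a right neighborhood of $0$ (note that $u_i'(a)>0$ makes $r_i$ differentiable up to $s=0$). Second, by Proposition \ref{1}(i) the energy $I_2(r)=(u_2'(r))^2+2F(u_2(r))$ is strictly positive on $[a,b)$, so $V_2(s)=(r_2(s))^{2(n-1)}I_2(r_2(s))>0$ throughout $[0,s_0]$.

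The core estimate is the following. On any subinterval of $(0,s_0]$ where $r_1>r_2$, subtracting the two identities makes the term $-2F(s)$ cancel, leaving
\[
 p_2^2-p_1^2=V_2\Bigl(\frac{1}{r_2^{2(n-1)}}-\frac{1}{r_1^{2(n-1)}}\Bigr)+\frac{V_2-V_1}{r_1^{2(n-1)}} .
\]
Both terms are nonnegative --- the first because $V_2>0$ and $r_1>r_2$, the second because $V_1\le V_2$ --- and the first is \emph{strictly} positive, so $p_2^2>p_1^2$; since $p_1,p_2>0$ this yields $p_1<p_2$, i.e. $r_1'>r_2'$ on that subinterval. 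To globalize, I would let $s_1$ be the first point of $(0,s_0]$ at which $r_1$ meets $r_2$, if any. On $(0,s_1)$ we have $r_1>r_2$, hence $r_1'>r_2'$, and integrating from $r_1(0)=r_2(0)=a$ gives $r_1(s_1)>r_2(s_1)$, contradicting the definition of $s_1$. Thus $r_1>r_2$ on all of $(0,s_0]$, and feeding this back into the displayed identity gives $r_1'>r_2'$ on $(0,s_0]$ as well.

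I expect the only delicate point to be the strictness in the core estimate, which rests entirely on the positivity $V_2>0$ coming from Proposition \ref{1}(i); without it one would obtain only the non-strict inequalities $r_1\ge r_2$ and $r_1'\ge r_2'$, too weak to rule out a crossing. Everything else is a routine connectedness argument, and it is worth noting that the algebraic cancellation of $-2F(s)$ makes the sign of $F$ (and so the precise role of $\beta$) irrelevant to this particular lemma.
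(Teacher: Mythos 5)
Your proof is correct and rests on the same mechanism as the paper's: the hypothesis $V_1\le V_2$, combined with $r_1>r_2$ and the strict positivity of the energy $(u_2')^2+2F(u_2)$ from Proposition \ref{1}(i), forces $u_1'(r_1(s))<u_2'(r_2(s))$, i.e. $r_1'>r_2'$, and a continuation argument from $r_1(0)=r_2(0)=a$, $r_1'(0)>r_2'(0)$ finishes the job. The only difference is organizational: the paper tracks the two inequalities $r_1>r_2$ and $r_1'>r_2'$ simultaneously and derives the contradiction $V_1(s_1)>V_2(s_1)$ at the first point where one of them fails, whereas you extract the slope ordering pointwise from the algebraic identity $p_i^2=V_i/r_i^{2(n-1)}-2F$ and then only need to continue the radius ordering --- a slightly cleaner packaging of the same argument.
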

\begin{proof}
 Note that $V_1(0)=a^{2(n-1)}(u_1'(a))^2<a^{2(n-1)}(u_2'(a))^2=V_2(0)$,
$r_1(0)=a=r_2(0)$ and
\begin{equation*}
r_1'(0) = \frac{1}{u_1'(a)} > \frac{1}{u_2'(a)} = r_2'(0).
\end{equation*}
Then there exists $\delta>0$ such that $r_1(s)>r_2(s)$ and
$r_1'(s)>r_2'(s)$ for $s \in (0,\delta)$.
Assume that there exists $s_1 \in (0,s_0]$ such that
$r_1(s)>r_2(s)$ and $r_1'(s)>r_2'(s)$ for $s \in (0,s_1)$ and either
$r_1(s_1)=r_2(s_1)$ or $r_1'(s_1)=r_2'(s_1)$.
Since $r_1'(s)>r_2'(s)$ for $s \in (0,s_1)$, we conclude that
$r_1(s)-r_2(s)$ is strictly increasing in $(0,s_1)$.
Hence, $r_1(s_1)-r_2(s_1)>r_1(0)-r_2(0)=a-a=0$, that is, $r_1(s_1)>r_2(s_1)$.
Therefore, $r_1'(s_1)=r_2'(s_1)$, which implies that
$u_1'(r_1(s_1))=u_2'(r_2(s_1))$.
We observe that
\begin{align*}
V_1(s_1)
& = (r_1(s_1))^{2(n-1)} \Bigl( (u_1'(r_1(s_1)))^2 + 2F(s_1) \Bigr) \\
& > (r_2(s_1))^{2(n-1)} \Bigl( (u_2'(r_2(s_1)))^2 + 2F(s_1) \Bigr)
= V_2(s_1).
\end{align*}
This is a contradiction.
Consequently, $r_1(s)>r_2(s)$ and $r_1'(s)>r_2'(s)$ for $s \in (0,s_0]$.
\end{proof}

\begin{proof} [{\bf Proof of Step one}]

Observe that if $\beta=0$ the proposition is trivially true. So from now on $\beta>0$.
 First we will prove that $V_1(s)<V_2(s)$ for $s \in [0,\beta]$.
 Since
 $$V_1(0)=a^{2(n-1)}(u_1'(a))^2<a^{2(n-1)}(u_2'(a))^2=V_2(0),$$
 there exists $\delta>0$ such that $V_1(s)<V_2(s)$ for $s \in [0,\delta)$.
 Assume that there exists $s_0 \in (0,\beta]$ such that $V_1(s_0)=V_2(s_0)$
 and $V_1(s)<V_2(s)$ for $s \in [0,s_0)$.
 From Lemma \ref{V<V} it follows that $r_1(s)>r_2(s)$ and
 $r_1'(s)>r_2'(s)$ for $s \in (0,s_0]$.
 We observe that
 \begin{equation*}
  V_i'(s) = 4(n-1) (r_i(s))^{2n-3} r_i'(s)F(s).
 \end{equation*}
 Since $F(s)<0$ for $s \in (0,s_0)$, we have $V_1'(s)<V_2'(s)$ for
 $s \in (0,s_0)$, which implies that $V_2(s)-V_1(s)$ is strictly
 increasing in $(0,s_0)$.
 Then $V_2(s_0)-V_1(s_0)>V_2(0)-V_1(0)>0$.
 This contradicts the fact that $V_1(s_0)=V_2(s_0)$.
 Therefore, $V_1(s)<V_2(s)$ for $s \in [0,\beta]$.
 In particular, $V_1(\beta)<V_2(\beta)$, which implies \eqref{atbeta}.
 Using Lemma \ref{V<V} again, we conclude that $r_1(s)>r_2(s)$ for
 $s \in (0,\beta]$.
\end{proof}
\begin{prop}\label{subida2} {\bf (Step two)}
 Assume that $f$ satisfies $(f_1)$ and $(f_2)$.
 Let $u_1$ and $u_2$ be two positive solutions of \eqref{R}
 with $0<u_1'(a)<u_2'(a)$.
 Then $M_1<M_2$ and
 \begin{equation}
  r_1(s)>r_2(s) \quad \mbox{and} \quad
  \frac{(r_1(s))^{n-1}}{r_1'(s)} < \frac{(r_2(s))^{n-1}}{r_2'(s)}
   \label{r1<r2}
 \end{equation}
 for all $s\in(\beta,M_1)$.
\end{prop}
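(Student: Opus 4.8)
The plan is to track, along the increasing branches, the quantity appearing in \eqref{atbeta},
\[
 W_i(s):=\frac{(r_i(s))^{n-1}}{r_i'(s)}=(r_i(s))^{n-1}u_i'(r_i(s)),\qquad i=1,2,
\]
so that the two conclusions in \eqref{r1<r2} read exactly $r_1(s)>r_2(s)$ and $W_1(s)<W_2(s)$. Differentiating the identity $(r^{n-1}u_i')'=-r^{n-1}f(u_i)$ and composing with $r=r_i(s)$ gives
\[
 W_i'(s)=-(r_i(s))^{n-1}f(s)\,r_i'(s)=-\frac{(r_i(s))^{2(n-1)}f(s)}{W_i(s)},
\]
together with $r_i'(s)=(r_i(s))^{n-1}/W_i(s)$. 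The first thing to record is the purely algebraic fact that whenever $r_1\ge r_2$ and $W_1<W_2$ hold, then $r_1'>r_2'$ (since $W_1<W_2$ means $(r_1)^{n-1}/r_1'<(r_2)^{n-1}/r_2'$, which with $r_1\ge r_2$ forces $r_1'/r_2'>(r_1/r_2)^{n-1}\ge1$). By Proposition \ref{subida} we have $r_1(\beta)\ge r_2(\beta)$ (strict for $\beta>0$, and $r_1(0)=r_2(0)=a$ when $\beta=0$), while \eqref{atbeta} gives $W_1(\beta)<W_2(\beta)$; hence $r_1'(\beta)>r_2'(\beta)$ and both strict inequalities hold just to the right of $\beta$.

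Next I would propagate these inequalities across $(\beta,\min\{M_1,M_2\})$, where $F(s)>0$ and thus $f(s)>0$. Let $s^*$ be the first point where one of them fails. On $(\beta,s^*)$ we have $r_1'>r_2'$ by the algebraic fact, so $r_1-r_2$ is strictly increasing and $r_1(s^*)-r_2(s^*)>r_1(\beta)-r_2(\beta)\ge0$; the failure therefore cannot be in the $r$-inequality and must be $W_1(s^*)=W_2(s^*)=:W^*>0$ with $r_1(s^*)>r_2(s^*)$. But then the formula for $W_i'$ gives
\[
 W_1'(s^*)=-\frac{(r_1(s^*))^{2(n-1)}f(s^*)}{W^*}<-\frac{(r_2(s^*))^{2(n-1)}f(s^*)}{W^*}=W_2'(s^*),
\]
whereas $W_2-W_1>0$ on $(\beta,s^*)$ with $(W_2-W_1)(s^*)=0$ forces $(W_2-W_1)'(s^*)\le0$, i.e. $W_2'(s^*)\le W_1'(s^*)$. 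This contradiction shows $r_1>r_2$ and $W_1<W_2$ throughout $(\beta,\min\{M_1,M_2\})$.

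It remains to prove $M_1<M_2$, after which the interval becomes $(\beta,M_1)$ as claimed. Since $r_i(s)\to c_i<\infty$ and $u_i'(c_i)=0$, we have $W_i(s)\to0$ as $s\to M_i^-$. The case $M_1>M_2$ is then immediate: letting $s\to M_2^-$ would give $W_1(s)<W_2(s)\to0$, contradicting $W_1(M_2)=(r_1(M_2))^{n-1}u_1'(r_1(M_2))>0$ since $r_1(M_2)<c_1$. The delicate case, which I expect to be the main obstacle, is $M_1=M_2=:M$; here the strict increase of $r_1-r_2$ yields $c_1>c_2$. I would dispose of it by a local analysis at the maximum: because $u_i(c_i)=M$, $u_i'(c_i)=0$ and $u_i''(c_i)=-f(M)<0$ (note $M>\beta\ge B$, so $f(M)>0$), a Taylor expansion gives $u_i'(r_i(s))=\sqrt{2f(M)(M-s)}\,(1+o(1))$ as $s\to M^-$, with leading term independent of $i$. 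Consequently
\[
 \frac{W_1(s)}{W_2(s)}=\frac{(r_1(s))^{n-1}u_1'(r_1(s))}{(r_2(s))^{n-1}u_2'(r_2(s))}\longrightarrow\left(\frac{c_1}{c_2}\right)^{n-1}>1,
\]
so $W_1>W_2$ near $M$, contradicting $W_1<W_2$ on $(\beta,M)$. Hence $M_1<M_2$ and the proposition follows.
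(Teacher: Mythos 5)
Your proof is correct, and its core coincides with the paper's argument. The paper works with the single function $w(s):=(r_1(s))^{n-1}u_1'(r_1(s))-(r_2(s))^{n-1}u_2'(r_2(s))$, which is exactly your $W_1-W_2$, and propagates $w<0$, $r_1>r_2$ from $s=\beta$ by the same first-failure-point scheme: monotonicity of $r_1-r_2$ (your ``algebraic fact'') rules out failure of the $r$-inequality, and at a first zero $s_0$ of $w$ the sign computation
$w'(s_0)=-\frac{f(s_0)}{(r_2(s_0))^{n-1}u_2'(r_2(s_0))}\bigl((r_1(s_0))^{2(n-1)}-(r_2(s_0))^{2(n-1)}\bigr)<0$
contradicts $w'(s_0)\ge0$; the case $M_2<M_1$ is then excluded by the same limit argument you give. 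The genuine difference is your treatment of $M_1=M_2$: the paper's written proof stops after showing $\min\{M_1,M_2\}=M_1$, i.e.\ $M_1\le M_2$, and never addresses the equality case, even though the strict inequality is what the proposition asserts and what Step Four actually needs (there $\bar r_2'(M_1)$ must be finite and negative, which fails when $M_1=M_2$). Your local analysis at the maximum --- $u_i'(r_i(s))=\sqrt{2f(M)(M-s)}\,(1+o(1))$, legitimate because $u_i''$ is continuous with $u_i''(c_i)=-f(M)<0$ and $f(M)>0$ since $M>\beta\ge B$, hence $W_1/W_2\to(c_1/c_2)^{n-1}>1$ using $c_1>c_2$ --- closes this case correctly, so on this point your proof is more complete than the paper's. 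For what it is worth, a cheaper closing is available inside the same framework: on $(\beta,M)$ one has $w'(s)=-f(s)\bigl((r_1(s))^{2(n-1)}/W_1(s)-(r_2(s))^{2(n-1)}/W_2(s)\bigr)<0$, so $w$ is strictly decreasing and negative, whence $\lim_{s\to M^-}w(s)\le w(s_0)<0$ for any fixed $s_0\in(\beta,M)$, while $M_1=M_2$ would force this limit to equal $0$.
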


\begin{proof}
 From Proposition \ref{subida} we have that the second part of  \eqref{r1<r2} holds at $s=\beta$, and $r_1(s)>r_2(s)$ in some right neighborhood of
 $\beta$ when $\beta\ge 0$.
 Hence $w(s):=(r_1(s))^{n-1}u_1'(r_1(s))-(r_2(s))^{n-1}u_2'(r_2(s))<0$
 and $r_1(s)>r_2(s)$ in some right neighborhood of $\beta$.
 Let $M:=\min\{M_1,M_2\}$.
 Assume that there exists $s_0 \in (\beta,M)$ such that $w(s)<0$ and
 $r_1(s)>r_2(s)$ for $s \in [\beta,s_0)$ and either $w(s_0)=0$ or
 $r_1(s_0)=r_2(s_0)$.
 Then
 \begin{equation*}
  r_1'(s) > \Bigl(\frac{r_1(s)}{r_2(s)} \Bigr)^{n-1} r_2'(s) > r_2'(s),
  \quad s \in [\beta,s_0).
 \end{equation*}
 Hence $r_1(s)-r_2(s)$ is strictly increasing on $[\beta,s_0)$.
 Since $r_1(\beta)-r_2(\beta) \ge 0$, we have $r_1(s_0)-r_2(s_0)>0$.
 Therefore, $w(s_0)=0$ and $r_1(s_0)>r_2(s_0)$.
 Then $w'(s_0) \ge 0$.
 On the other hand, since
 \begin{align*}
  w'(s) & = -f(s) \Bigl(
            \frac{(r_1(s))^{n-1}}{u_1'(r_1(s))}
          - \frac{(r_2(s))^{n-1}}{u_2'(r_2(s))}
                 \Bigr) \\
  & = - \frac{f(s)}{(r_2(s))^{n-1}u_2'(r_2(s))}
        \Biggl( - \frac{(r_1(s))^{n-1}w(s)}{u_1'(r_1(s))}
                 + (r_1(s))^{2(n-1)} - (r_2(s))^{2(n-1)} \Biggr),
 \end{align*}
 we have
 \begin{align*}
  w'(s_0) = - \frac{f(s_0)}{(r_2(s_0))^{n-1}u_2'(r_2(s_0))}
        \Biggl( (r_1(s_0))^{2(n-1)} - (r_2(s_0))^{2(n-1)} \Biggr) < 0.
 \end{align*}
 This is a contradiction.
 Therefore, $w(s)<0$ and $r_1(s)>r_2(s)$ for $s \in (\beta,M)$.
 This shows that \eqref{r1<r2} holds for $s \in (\beta,M)$.

 Finally we prove that $M=M_1$.
 Assume that $M_2<M_1$.
 Letting $r \to M_2$ in $w(s)<0$, we have
 \begin{equation*}
  0 \ge \lim_{s\to M_2} w(s) = (r_1(M_2))^{n-1}u_1'(r_1(M_2)) > 0,
 \end{equation*}
 which is a contradiction.
\end{proof}

For our next steps
we will use the functional introduced first by Erbe and Tang in
\cite{et1}, and  we shall use many of their ideas. Set
$$
 P_i(s) = -2n\frac{F(s)}{f(s)}\frac{(r_i(s))^{n-1}}{r_i'(s)}
        -\frac{(r_i(s))^n}{(r_i'(s))^2}
	-2(r_i(s))^nF(s), \quad i=1,2.
$$
We observe that
$$
 P_i(s) = -2n\frac{F(s)}{f(s)}\frac{(r_i(s))^{n-1}}{r_i'(s)}
          -(r_i(s))^{-(n-2)}V_i(s), \quad i=1,2.
$$
and
\begin{equation}
 P_i'(s) = \Bigl(n-2-2n\Bigl(\frac{F}{f}\Bigr)'(s)\Bigr)
           \frac{(r_i(s))^{n-1}}{r_i'(s)}, \quad i=1,2.
\end{equation}

\begin{prop}\label{lem31}{\bf (Step three)}
 Assume that $f$ satisfies $(f_1)$--$(f_3)$.
 Let $u_1$ and $u_2$ be positive solutions of \eqref{R} with
 $0<u_1'(a)<u_2'(a)$.
 Then
 $$
  P_1(M_1)>P_2(M_1).
 $$
\end{prop}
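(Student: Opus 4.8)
The plan is to compare the two functionals through their difference $P_1 - P_2$ on the interval $[\beta, M_1]$, showing that it is \emph{positive at the left endpoint} $\beta$ and \emph{nondecreasing} throughout, so that $P_1(M_1) - P_2(M_1) \ge P_1(\beta) - P_2(\beta) > 0$. I will treat $\beta > 0$ as the main case (so that $(f_2)$ gives $F(\beta)=0$ and $f(\beta)>0$) and handle $\beta=0$ as a limiting variant.

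First I would establish the strict base inequality $P_1(\beta) > P_2(\beta)$. Since $F(\beta)=0$, the first summand in the definition of $P_i$ vanishes at $s=\beta$, and from the form $P_i = -2n(F/f)\,(r_i^{n-1}/r_i') - r_i^{-(n-2)}V_i$ one gets $P_i(\beta) = -(r_i(\beta))^{-(n-2)}V_i(\beta)$. Now Step one (Proposition \ref{subida}) supplies both $V_1(\beta)<V_2(\beta)$ and $r_1(\beta)>r_2(\beta)$. Since $n\ge 2$, the map $r\mapsto r^{-(n-2)}$ is nonincreasing, so $(r_1(\beta))^{-(n-2)} \le (r_2(\beta))^{-(n-2)}$; together with $0<V_1(\beta)<V_2(\beta)$ this yields $(r_1(\beta))^{-(n-2)}V_1(\beta) < (r_2(\beta))^{-(n-2)}V_2(\beta)$, that is, $P_1(\beta) > P_2(\beta)$.

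Next I would compare the derivatives on $(\beta, M_1)$ using the stated formula $P_i'(s) = \bigl(n-2-2n(F/f)'(s)\bigr)(r_i(s))^{n-1}/r_i'(s)$. Write $g(s):=n-2-2n(F/f)'(s)$, which is $\le 0$ for $s>\beta$ precisely by the subcriticality hypothesis $(f_3)$. The factor $(r_i(s))^{n-1}/r_i'(s) = (r_i(s))^{n-1}u_i'(r_i(s))$ is positive on $(\beta,M_i)$, and Step two (Proposition \ref{subida2}) gives the ordering $(r_1(s))^{n-1}/r_1'(s) < (r_2(s))^{n-1}/r_2'(s)$ for $s\in(\beta,M_1)$. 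Multiplying this strict inequality by the nonpositive quantity $g(s)$ reverses it, so $P_1'(s)\ge P_2'(s)$ on $(\beta,M_1)$; hence $P_1-P_2$ is nondecreasing there. Integrating from $\beta$ to $M_1$ and combining with the base inequality gives the claim.

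The step I expect to need the most care is the passage to the endpoint $M_1$ and the edge case $\beta=0$. For the endpoint, since $M_1<M_2$ by Step two, $P_2$ is smooth at $M_1$, while $1/r_1'(M_1)=u_1'(c_1)=0$ forces the first two summands of $P_1$ to vanish as $s\to M_1^-$ (here $f(M_1)>0$ because $M_1>\beta\ge B$, so $F/f$ stays bounded), leaving $P_1(M_1)=-2(r_1(M_1))^nF(M_1)$ and guaranteeing the continuity needed for the integrated inequality to survive at $s=M_1$. For $\beta=0$ I would run the same monotonicity argument on $(0,M_1)$, after checking that $(F/f)(s)\to 0$ as $s\to 0^+$ so that $P_i(0^+)$ reduces to $-a^n(u_i'(a))^2$; the strict base inequality at $0^+$ then follows directly from $u_1'(a)<u_2'(a)$.
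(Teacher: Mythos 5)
Your proposal is correct and takes essentially the same route as the paper: the identical base inequality at $s=\beta$ (your $P_i(\beta)=-(r_i(\beta))^{-(n-2)}V_i(\beta)$ is precisely the paper's $-\bigl(\tfrac{(r_i(\beta))^{n-1}}{r_i'(\beta)}\bigr)^2(r_i(\beta))^{-(n-2)}$, since $F(\beta)=0$ makes $V_i(\beta)=\bigl(\tfrac{(r_i(\beta))^{n-1}}{r_i'(\beta)}\bigr)^2$), the same monotonicity of $P_1-P_2$ on $(\beta,M_1)$ from $(f_3)$ combined with Step two, and the same limiting argument $F/f\to 0$ at $0^+$ for the case $\beta=0$ (which the paper verifies via integration by parts and L'H\^opital). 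The only cosmetic difference is that you make explicit the continuity of $P_1$ at $M_1$, which the paper leaves implicit.
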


\begin{proof}
 If $\beta>0$, then Proposition \ref{subida} implies that
 \begin{equation*}
  r_1(\beta)>r_2(\beta), \quad
    \frac{(r_1(\beta))^{n-1}}{r_1'(\beta)}
  < \frac{(r_2(\beta))^{n-1}}{r_2'(\beta)},
 \end{equation*}
 and hence
 \begin{align*}
  P_1(\beta)-P_2(\beta)
  & = - \frac{(r_1(\beta))^n}{(r_1'(\beta))^2}
      + \frac{(r_2(\beta))^n}{(r_2'(\beta))^2} \\
  & = - \Bigl(\frac{(r_1(\beta))^{n-1}}{r_1'(\beta)}\Bigr)^2
        \frac{1}{(r_1(\beta))^{n-2}}
      + \Bigl(\frac{(r_2(\beta))^{n-1}}{r_2'(\beta)}\Bigr)^2
        \frac{1}{(r_2(\beta))^{n-2}} > 0.
 \end{align*}
 For the case $\beta=B=0$, integration by parts gives
 \begin{equation*}
  F(s) = sf(s) -\int_0^stf'(t)dt,
 \end{equation*}
and by L'H\^opital's rule we have
 \begin{equation*}
  \lim_{s\to0} \frac{F(s)}{f(s)}
   = \lim_{s\to0} s - \lim_{s\to0} \frac{\int_0^s tf'(t) dt}{f(s)}
   = 0 - \lim_{s\to0} \frac{sf'(s)}{f'(s)}
   = 0
 \end{equation*}
 thus
 \begin{eqnarray*}
  P_1(\beta)-P_2(\beta)
  = \lim_{s\to0} (P_1(s)-P_2(s))
 & = &-\frac{a^n}{(r_1'(0))^2}+\frac{a^n}{(r_2'(0))^2} \\
  &= & a^n (-(u_1'(a))^2+(u_2'(a))^2) > 0.
 \end{eqnarray*}
 Using $(f_3)$ and Proposition \ref{subida2}, we have
 $$
  \left(P_1-P_2\right)'(s)
  = \Bigl(n-2-2n\Bigl(\frac{F}{f}\Bigr)'(s)\Bigr)
    \Bigl( \frac{(r_1(s))^{n-1}}{r_1'(s)}
         - \frac{(r_2(s))^{n-1}}{r_1'(s)} \Bigr)
  \ge  0
 $$
 for $s \in (\beta,M_1)$.
 Thus $P_1(s)-P_2(s)\ge P_1(\beta)-P_2(\beta)>0$ which implies that
 $P_1(s)>P_2(s)$ for $s\in [\beta, M_1]$.
\end{proof}

We will now study the solutions going down from their maximum points, and their corresponding inverses $\bar r_i$.

 For our next step we will use $\bar P_i(s)$ the corresponding functional to $P_i(s)$.
\begin{equation}
 \begin{gathered}
  \bar P_i(s)
   = - 2n\frac{F(s)}{f(s)}\frac{(\bar r_i(s))^{n-1}}{\bar r_i'(s)}
     - \frac{(\bar r_i(s))^n}{(\bar r_i'(s))^2}
     - 2(\bar r_i(s))^nF(s), \\
  \bar P_i'(s)
    = \Bigl(n-2-2n\Bigl(\frac{F}{f}\Bigr)'(s)\Bigr)
      \frac{(\bar r_i(s))^{n-1}}{\bar r_i'(s)}.
 \end{gathered}
\end{equation}

\begin{prop}\label{lem32}{\bf (Step four)}
 Assume that $f$ satisfies $(f_1)$--$(f_3)$.
 Let $u_1$ and $u_2$ be positive solutions of \eqref{R} with
 $0<u_1'(a)<u_2'(a)$. If $\bar r_1(M_1)<\bar r_2(M_1)$ and $\bar r_1(s)$, $\bar r_2(s)$
 intersect each other in $[\beta,M_1)$, then

 $$
  \frac{(\bar r_1(s))^{n-1}}{\bar r_1'(s)} >
  \frac{(\bar r_2(s))^{n-1}}{\bar r_2'(s)}, \ s \in [s_I,M_1),
  \quad \mbox{and} \quad
  \bar P_1(s)>\bar P_2(s), \ s \in [s_I,M_1]
 $$
 where $s_I \in [\beta,M_1)$ is the largest intersection point of
 $\bar r_1(s)$ and $\bar r_2(s)$.
\end{prop}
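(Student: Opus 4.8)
The plan is to mirror Steps two and three on the decreasing branch, comparing the two solutions as $s$ \emph{decreases} from $M_1$, and then to transfer the strict inequality $P_1(M_1)>P_2(M_1)$ of Step three to the endpoint value $\bar P_1(M_1)>\bar P_2(M_1)$. First I set $\bar w(s):=(\bar r_1(s))^{n-1}/\bar r_1'(s)-(\bar r_2(s))^{n-1}/\bar r_2'(s)=(\bar r_1(s))^{n-1}u_1'(\bar r_1(s))-(\bar r_2(s))^{n-1}u_2'(\bar r_2(s))$, so that the first claimed inequality is exactly $\bar w(s)>0$. Since $s_I$ is the largest intersection of $\bar r_1,\bar r_2$ in $[\beta,M_1)$ and $\bar r_1(M_1)<\bar r_2(M_1)$, continuity gives $\bar r_1(s)<\bar r_2(s)$ on $(s_I,M_1]$. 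At the right endpoint $\bar w(M_1)=-(\bar r_2(M_1))^{n-1}u_2'(\bar r_2(M_1))>0$, because $M_1<M_2$ (Step two) forces $u_2'(\bar r_2(M_1))<0$. Differentiating $\bar w$ by means of $(r^{n-1}u_i')'=-r^{n-1}f(u_i)$ and the same rearrangement as in Step two (valid verbatim on the decreasing branch) yields
$$\bar w'(s)=-\frac{f(s)}{(\bar r_2(s))^{n-1}u_2'(\bar r_2(s))}\Bigl(-\frac{(\bar r_1(s))^{n-1}\bar w(s)}{u_1'(\bar r_1(s))}+(\bar r_1(s))^{2(n-1)}-(\bar r_2(s))^{2(n-1)}\Bigr).$$
I then argue by contradiction: if $\bar w$ had a zero in $(s_I,M_1)$, take the largest one $s_2$, so $\bar w>0$ on $(s_2,M_1]$ and hence $\bar w'(s_2)\ge0$. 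But at $s_2$ the bracket reduces to $(\bar r_1(s_2))^{2(n-1)}-(\bar r_2(s_2))^{2(n-1)}<0$ (since $s_2>s_I$), while $f(s_2)>0$ and $u_2'(\bar r_2(s_2))<0$, so the displayed formula forces $\bar w'(s_2)<0$, a contradiction. Thus $\bar w>0$ on $(s_I,M_1]$.

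To include the endpoint $s_I$, I note $\bar w(s_I)\ge0$ by continuity; if $\bar w(s_I)=0$, then at the common radius $\rho:=\bar r_1(s_I)=\bar r_2(s_I)$ we would have $u_1(\rho)=u_2(\rho)=s_I$ and $u_1'(\rho)=u_2'(\rho)$, so by uniqueness for the initial value problem (here $s_I>0$, so $f$ is $C^1$ near the data) $u_1\equiv u_2$, contradicting distinctness. Hence $\bar w(s_I)>0$ and the first inequality holds on $[s_I,M_1)$. For the second inequality I use $(\bar P_1-\bar P_2)'(s)=\bigl(n-2-2n(F/f)'(s)\bigr)\bar w(s)$; by $(f_3)$ the first factor is $\le0$ for $s>\beta$ and $\bar w(s)>0$, so $\bar P_1-\bar P_2$ is nonincreasing on $[s_I,M_1]$ and therefore $\bar P_1(s)-\bar P_2(s)\ge\bar P_1(M_1)-\bar P_2(M_1)$ there.

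It remains to show $\bar P_1(M_1)>\bar P_2(M_1)$, and I expect this to be the main obstacle, since expanding the difference directly is sign-indefinite. The trick is to regard the Erbe--Tang functional as a function of $r$, $\Pi_i(r):=-2n\frac{F}{f}(u_i(r))r^{n-1}u_i'(r)-r^n(u_i'(r))^2-2r^nF(u_i(r))$, so that $P_i(s)=\Pi_i(r_i(s))$, $\bar P_i(s)=\Pi_i(\bar r_i(s))$, and $\Pi_i'(r)=\bigl(n-2-2n(F/f)'(u_i(r))\bigr)r^{n-1}(u_i'(r))^2\le0$ wherever $u_i(r)>\beta$. At $s=M_1$ the first two terms of $\bar P_1$ vanish because $u_1'(c_1)=0$, so $\bar P_1(M_1)=\Pi_1(c_1)=P_1(M_1)$. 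Writing $\sigma_2=r_2(M_1)<c_2<\rho_2=\bar r_2(M_1)$ for the two radii where $u_2=M_1$, one has $u_2\ge M_1>\beta$ on $[\sigma_2,\rho_2]$, so $\Pi_2$ is nonincreasing there and $\bar P_2(M_1)=\Pi_2(\rho_2)\le\Pi_2(\sigma_2)=P_2(M_1)$. Combining with the strict inequality $P_1(M_1)>P_2(M_1)$ of Step three gives $\bar P_1(M_1)=P_1(M_1)>P_2(M_1)\ge\bar P_2(M_1)$, whence $\bar P_1(s)-\bar P_2(s)\ge\bar P_1(M_1)-\bar P_2(M_1)>0$ on $[s_I,M_1]$, as desired.
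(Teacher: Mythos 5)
Your proof is correct and follows essentially the same route as the paper's: the first inequality is obtained by the same largest-zero contradiction argument on $\bar w(s)=(\bar r_1(s))^{n-1}u_1'(\bar r_1(s))-(\bar r_2(s))^{n-1}u_2'(\bar r_2(s))$, and the key endpoint bound $\bar P_1(M_1)>\bar P_2(M_1)$ comes from Step three plus monotonicity of the Erbe--Tang functional along $u_2$, which the paper writes as $P_2(M_1)\ge P_2(M_2)=\bar P_2(M_2)\ge \bar P_2(M_1)$ and you express equivalently as $\Pi_2$ being nonincreasing in $r$ on $[r_2(M_1),\bar r_2(M_1)]$. Your explicit IVP-uniqueness justification of strictness at $s=s_I$ is a point the paper merely asserts (``for $s=s_I$ it is also true''), so your write-up is, if anything, slightly more careful on that detail.
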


\begin{proof}

 Since
 \begin{equation*}
  \lim_{s\to M_1} \frac{(\bar r_1(s))^{n-1}}{\bar r_1'(s)} = 0 >
  \frac{(\bar r_2(M_1))^{n-1}}{\bar r_2'(M_1)},
 \end{equation*}
 we find that
 $(\bar r_1(s))^{n-1}/\bar r_1'(s)>(\bar r_2(s))^{n-1}/\bar r_2'(s)$ in
 some left neighborhood of $M_1$.
 Assume that there exists $s_0 \in (s_I,M_1)$ such that
 \begin{equation*}
  \frac{(\bar r_1(s_0))^{n-1}}{\bar r_1'(s_0)} =
  \frac{(\bar r_2(s_0))^{n-1}}{\bar r_2'(s_0)}
  \quad \mbox{and} \quad
  \frac{(\bar r_1(s))^{n-1}}{\bar r_1'(s)} >
  \frac{(\bar r_2(s))^{n-1}}{\bar r_2'(s)}, \ s \in (s_0,M_1).
 \end{equation*}
 Since $\bar r_1(s)<\bar r_2(s)$ for $s \in (s_I,M_1]$, we have
 $\bar r_1(s_0)<\bar r_2(s_0)$, which shows that
 $0>\bar r_1'(s_0)>\bar r_2'(s_0)$.
 Since
 \begin{align*}
  \Biggl( \frac{(\bar r_1)^{n-1}}{\bar r_1'}
        - \frac{(\bar r_2)^{n-1}}{\bar r_2'} \Biggr)'(s)
  & = \Bigl( (\bar r_1)^{n-1} u'_1(\bar r_1)
            -(\bar r_2)^{n-1} u'_2(\bar r_2) \Bigr)' (s)\\
  & = -f(s) \Bigl( (\bar r_1(s))^{n-1} \bar r_1'(s)
                 - (\bar r_2(s))^{n-1} \bar r_2'(s) \Bigr) \\
  & = -f(s) \Bigl( \frac{(\bar r_1(s))^{n-1}}{\bar r_1'(s)}(\bar r_1'(s))^2
                - \frac{(\bar r_2(s))^{n-1}}{\bar r_2'(s)}(\bar r_2'(s))^2
            \Bigr),
 \end{align*}
 we have
 \begin{equation*}
  \Biggl( \frac{(\bar r_1)^{n-1}}{\bar r_1'}
 - \frac{(\bar r_2)^{n-1}}{\bar r_2'} \Biggr)'(s_0)
  = f(s_0) \frac{(\bar r_1(s_0))^{n-1}}{-\bar r_1'(s_0)}
    ( (\bar r_1'(s_0))^2 -(\bar r_2'(s_0))^2 )
  < 0,
 \end{equation*}
because $s_0 > s_I \geq \beta \ge B$ and $f(s_0)>0$.
 This is a contradiction.
 Therefore,
 \begin{equation*}
  \frac{(\bar r_1(s))^{n-1}}{\bar r_1'(s)} >
  \frac{(\bar r_2(s))^{n-1}}{\bar r_2'(s)}, \quad s \in (s_I,M_1).
 \end{equation*}
But for $s=s_I$ it is also true. Hence it is true for $s \in [s_I,M_1).$

 We note that condition $(f_3)$ implies that $P_i(s)$ is
 nonincreasing and $\bar P_i$ is nondecreasing.
 From Proposition \ref{subida2} we have $M_1<M_2$, and from Proposition
 \ref{lem31} we have $P_1(M_1)>P_2(M_1)$, so
 $$
  \bar P_1(M_1) =  P_1(M_1) > P_2(M_1) \ge P_2(M_2) = \bar P_2(M_2)
  \ge \bar P_2(M_1).
 $$
 By condition $(f_3)$, we have
 $$
  \left(\bar P_1-\bar P_2\right)'(s)
  = \Bigl( n-2-2n\Bigl(\frac{F}{f}\Bigr)'(s) \Bigr)
     \Bigl(\frac{(\bar r_1(s))^{n-1}}{\bar r_1'(s)}
         - \frac{(\bar r_2(s))^{n-1}}{\bar r_2'(s)} \Bigr)
  \le 0, \quad s \in (s_I,M_1),
 $$
 which implies that
 \begin{equation*}
  \bar P_1(s)-\bar P_2(s) \ge \bar P_1(M_1)-\bar P_2(M_1) > 0, \quad
  s \in [s_I,M_1].
 \end{equation*}
 Thus $\bar P_1(s)>\bar P_2(s)$ for $s\in[s_I,M_1]$.
\end{proof}

\begin{prop}\label{final1}{\bf (Step five)}
 Assume that $f$ satisfies $(f_1)$--$(f_3)$.
 Let $u_1$ and $u_2$ be positive solutions of \eqref{R} with
 $0<u_1'(a)<u_2'(a)$.
 If either
 \begin{enumerate}
  \item[(i)]  $\bar r_1(M_1)<\bar r_2(M_1)$ and $\bar r_1(s)$, $\bar r_2(s)$
 intersect each other in $(\beta,M_1)$,\quad or
  \item[(ii)] $\bar r_1(M_1)\geq \bar r_2(M_1)$,
 \end{enumerate}
 then
 \begin{equation}
  \bar r_1(\beta)>\bar r_2(\beta) \quad \mbox{and} \quad
  \frac{\bar r_1(\beta)}{\bar r_1'(\beta)} >
  \frac{\bar r_2(\beta)}{\bar r_2'(\beta)}.
   \label{at_beta}
 \end{equation}

\end{prop}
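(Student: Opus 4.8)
The plan is to reduce the two inequalities in \eqref{at_beta} to two scalar comparisons at $s=\beta$, and then to obtain those comparisons by continuing the order relations of Step four downward to $\beta$.

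\emph{Reduction at $\beta$.} Since $\int_0^\beta f=0$ we have $F(\beta)=0$, while $f(\beta)>0$ because $\beta>B$. Hence in the formula for $\bar P_i$ the first and third terms vanish at $s=\beta$, leaving $\bar P_i(\beta)=-(\bar r_i(\beta))^n/(\bar r_i'(\beta))^2$. Consequently
\[
 \left(\frac{\bar r_i(\beta)}{\bar r_i'(\beta)}\right)^2
 = \frac{-\bar P_i(\beta)}{(\bar r_i(\beta))^{n-2}},
\]
and, because $\bar r_i(\beta)>0>\bar r_i'(\beta)$, taking the negative square root gives $\bar r_i(\beta)/\bar r_i'(\beta)=-\sqrt{-\bar P_i(\beta)/(\bar r_i(\beta))^{n-2}}$. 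Therefore, if I can establish the two facts (a) $\bar r_1(\beta)>\bar r_2(\beta)$ and (b) $\bar P_1(\beta)>\bar P_2(\beta)$, then since $-\bar P_i(\beta)>0$ the radicand for $i=1$ is strictly smaller than for $i=2$ (numerator smaller, denominator larger, valid for every $n\ge2$), which yields the second inequality in \eqref{at_beta}; and (a) is the first. So the whole proposition reduces to proving (a) and (b).

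\emph{Starting data.} Write $G_i(s):=(\bar r_i(s))^{n-1}/\bar r_i'(s)$, the quantity compared in Step four. In case (ii) I take $s^\ast=M_1$: by Propositions \ref{subida2} and \ref{lem31} together with the monotonicity of $\bar P_i$ coming from $(f_3)$, the chain $\bar P_1(M_1)=P_1(M_1)>P_2(M_1)\ge P_2(M_2)=\bar P_2(M_2)\ge\bar P_2(M_1)$ gives $\bar P_1(M_1)>\bar P_2(M_1)$; moreover $\bar r_1(M_1)=c_1\ge\bar r_2(M_1)$ and $G_1(M_1)=0>G_2(M_1)$. In case (i) I take $s^\ast=s_I$: Proposition \ref{lem32} (Step four) supplies $\bar P_1(s_I)>\bar P_2(s_I)$ and $G_1(s_I)\ge G_2(s_I)$, with $\bar r_1>\bar r_2$ just below $s_I$. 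In both cases, at $s^\ast$ we have $\bar P_1>\bar P_2$, $\bar r_1\ge\bar r_2$, $G_1\ge G_2$, and $\bar r_1>\bar r_2$ on a left neighbourhood of $s^\ast$.

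\emph{Continuation to $\beta$.} I would propagate the pair of inequalities ``$\bar r_1>\bar r_2$ and $\bar P_1>\bar P_2$'' downward from $s^\ast$ to $\beta$ using two mechanisms valid on any subinterval where $\bar r_1>\bar r_2$ and $G_1\ge G_2$: first, from $\bar r_i'=(\bar r_i)^{n-1}/G_i$ one gets $\bar r_1'\le\bar r_2'$, so $\bar r_1-\bar r_2$ increases as $s$ decreases and the order $\bar r_1>\bar r_2$ is preserved; second, by $(f_3)$, $(\bar P_1-\bar P_2)'(s)=\bigl(n-2-2n(F/f)'(s)\bigr)(G_1-G_2)\le0$, so $\bar P_1-\bar P_2$ increases as $s$ decreases and stays positive. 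Evaluating the surviving inequalities at $\beta$ gives exactly (a) and (b).

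\emph{Main obstacle.} The difficulty is that $G_1-G_2$ is not sign-definite: differentiating shows that, while $\bar r_1>\bar r_2$, the difference $G_1-G_2$ decreases as $s$ decreases, so it may become negative before $\beta$, and in that regime both mechanisms above fail. The crux is thus to rule out a premature coincidence of $\bar r_1,\bar r_2$ or of $\bar P_1,\bar P_2$ on $(\beta,s^\ast)$. I would argue by first failure: let $s_0$ be the largest point of $[\beta,s^\ast)$ at which either $\bar r_1(s_0)=\bar r_2(s_0)$ or $\bar P_1(s_0)=\bar P_2(s_0)$, use the one–sided derivative sign forced at $s_0$ (which pins the sign of $G_1-G_2$ there), and combine it with the explicit identity $\bar P_1-\bar P_2=-(G_1-G_2)\bigl[\,2n(F/f)+(G_1+G_2)/\bar r^{\,n-2}\,\bigr]$ at a coincidence radius to extract a contradiction. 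Forcing the sign of this bracketed term at the crossing is the delicate point, and it is precisely where the subcriticality $(f_3)$ and the detailed case hypotheses (i)–(ii) must be used in full.
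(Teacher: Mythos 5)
Your reduction at $s=\beta$ is fine (when $\beta>0$; for $\beta=B=0$ you would additionally need $\lim_{s\to0}F(s)/f(s)=0$, as in Step three, since then $f(\beta)=0$), and your starting data at $s^\ast$ is exactly what Steps three and four provide. But the proof stops precisely where the content of the proposition begins: the downward propagation on $(\beta,s^\ast)$ is never carried out, and the mechanism you sketch for it does not close. Both of your monotonicity mechanisms require $G_1\ge G_2$, where $G_i=(\bar r_i)^{n-1}/\bar r_i'$, and Step four gives this only on $[s_I,M_1)$, i.e.\ \emph{above} the intersection point, not below it. Your ``first failure'' endgame then breaks down at the decisive moment: at a first radius coincidence $\bar r_1(s_0)=\bar r_2(s_0)=\bar r$ the one-sided derivative argument forces $\bar r_1'(s_0)\ge\bar r_2'(s_0)$, hence $G_1(s_0)\le G_2(s_0)$, while continuity gives $\bar P_1(s_0)\ge\bar P_2(s_0)$; your identity $\bar P_1-\bar P_2=-(G_1-G_2)\bigl[2n(F/f)+(G_1+G_2)/\bar r^{\,n-2}\bigr]$ then yields no contradiction, because the bracket is the sum of a nonnegative term ($F\ge0$ past $\beta$) and a strictly negative one, so its sign is indeterminate --- exactly the point you concede is ``delicate.'' A similar impasse occurs at a first coincidence $\bar P_1(s_0)=\bar P_2(s_0)$ with $\bar r_1(s_0)>\bar r_2(s_0)$. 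So the proposal is a correct reduction plus an honest description of an obstacle, not a proof.

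The paper's proof avoids this trap by propagating a different pair of inequalities, namely \eqref{barr1>barr2}: $\bar r_1'<\bar r_2'<0$ together with $\bar r_1/\bar r_1'>\bar r_2/\bar r_2'$ --- the latter being literally the second inequality of \eqref{at_beta}, so no reduction at $\beta$ is needed, and the first integrating to $\bar r_1(\beta)>\bar r_2(\beta)$. The two failure modes are then excluded by two different arguments. A derivative coincidence $\bar r_1'(s_0)=\bar r_2'(s_0)$ is impossible because $\bar r_1/\bar r_1'>\bar r_2/\bar r_2'$ is equivalent to $(\log\bar r_1)'<(\log\bar r_2)'$, which, integrated from $s_0$ to $s_I$ (where $\bar r_1(s_I)=\bar r_2(s_I)$; in case (ii) one uses $M_1$ instead), forces $\bar r_1(s_0)>\bar r_2(s_0)$, contradicting the limit of the ratio inequality at $s_0$. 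A ratio coincidence $\bar r_1(s_0)/\bar r_1'(s_0)=\bar r_2(s_0)/\bar r_2'(s_0)$ is excluded by the key device your outline is missing: introduce $A=(\bar r_2(s_0)/\bar r_1(s_0))^{n-2}\le1$ and the \emph{scaled} functional $A\bar P_1-\bar P_2$. The constant $A$ is chosen so that at $s_0$ the $F/f$-terms and the $(\bar r_i)^n/(\bar r_i')^2$-terms cancel exactly, leaving $A\bar P_1(s_0)-\bar P_2(s_0)=-2F(s_0)(\bar r_2(s_0))^{n-2}\bigl((\bar r_1(s_0))^2-(\bar r_2(s_0))^2\bigr)\le0$; on the other hand $A\bar P_1-\bar P_2$ is positive at $s_I$ (since $A\le1$, $\bar P_1(s_I)<0$ and $\bar P_1(s_I)>\bar P_2(s_I)$ by Step four) and nonincreasing on $(s_0,s_I]$ by $(f_3)$, because the ratio $G_2/G_1$ is shown, via the ODE, to be increasing and hence to stay above its value $A$ at $s_0$, which keeps $AG_1-G_2>0$ there. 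This gives $A\bar P_1(s_0)-\bar P_2(s_0)>0$, the desired contradiction. Without this $A$-scaling trick (or a substitute for it), your scheme cannot be completed.
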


\begin{proof}
 (i)  Let $s_I$ be as in Proposition \ref{lem32}.
 Then we find that
 $\bar r_1'(s_I)<\bar r_2'(s_I)<0$.
 Since $\bar r_1(s_I)=\bar r_2(s_I)$, we have
 \begin{equation*}
  \frac{\bar r_1(s_I)}{\bar r_1'(s_I)} >
  \frac{\bar r_2(s_I)}{\bar r_2'(s_I)}.
 \end{equation*}
 Hence there exists $\delta>0$ such that
 \begin{equation}
  \bar r_1'(s)<\bar r_2'(s)<0, \quad
  \frac{\bar r_1(s)}{\bar r_1'(s)} >
  \frac{\bar r_2(s)}{\bar r_2'(s)}
   \label{barr1>barr2}
 \end{equation}
 for $s \in (s_I-\delta,s_I]$.

 First we assume that there exists $s_0 \in [\beta,s_I)$ such that
 \eqref{barr1>barr2} holds for $s \in (s_0,s_I]$ and
 $\bar r_1'(s_0)=\bar r_2'(s_0)$.
 The inequality $\bar r_1(s)/\bar r_1'(s)>\bar r_2(s)/\bar r_2'(s)$ is
 equivalent to
 \begin{equation*}
  (\log \bar r_1)'(s)< (\log \bar r_2)'(s).
 \end{equation*}
 Integrating this inequality over $(s_0,s_I)$ and using
 $\bar r_1(s_I)=\bar r_2(s_I)$, we have $\bar r_1(s_0)>\bar r_2(s_0)$.
 On the other hand, letting $s \to s_0$ in the inequality
 $\bar r_1(s)/\bar r_1'(s)>\bar r_2(s)/\bar r_2'(s)$ and using
 $\bar r_1'(s_0)=\bar r_2'(s_0)<0$, we have
 $\bar r_1(s_0) \le \bar r_2(s_0)$, which is a contradiction.
 Then $\bar r_1'(s)<\bar r_2'(s)<0$ for $s \in [\beta,s_I]$, which implies
 that $\bar r_1(s)>\bar r_2(s)$ for $s \in [\beta,s_I)$.

 Next we assume that there exists $s_0 \in [\beta,s_I)$ such that
 \eqref{barr1>barr2} holds for $s \in (s_0,s_I]$ and
 \begin{equation*}
  \frac{\bar r_1(s_0)}{\bar r_1'(s_0)} =
  \frac{\bar r_2(s_0)}{\bar r_2'(s_0)}.
 \end{equation*}
 Let
 $$
  A := \frac{(\bar r_2(s_0))^{n-1}\bar r_1'(s_0)}
            {(\bar r_1(s_0))^{n-1}\bar r_2'(s_0)}
     = \frac{(\bar r_2(s_0))^{n-2}}{(\bar r_1(s_0))^{n-2}}
     \le 1.
 $$
 Observe that
 $$
  \Bigl( \frac{(\bar r_2)^{n-1}\bar r_1'}
              {(\bar r_1)^{n-1}\bar r_2'} \Bigr)'(s)
   = f(s) \frac{(\bar r_2(s))^{n-1}\bar r_1'(s)}
               {(\bar r_1(s))^{n-1}\bar r_2'(s)}
                  ((\bar r_1'(s))^2-(\bar r_2'(s))^2)>0,
     \quad s \in (s_0,s_I].
 $$
 Hence,
 $$
  \frac{(\bar r_2(s))^{n-1}\bar r_1'(s)}{(\bar r_1(s))^{n-1}\bar r_2'(s)}
   > A, \quad s \in (s_0,s_I].
 $$
 As $\bar P_1(M_1)<0$ and $\bar P_1'(s) \ge 0$, we have $\bar P_1(s_I)<0$,
 which implies
 \begin{equation*}
  A\bar P_1(s_I)-\bar P_2(s_I) \ge \bar P_1(s_I)-\bar P_2(s_I) > 0,
 \end{equation*}
 by Proposition \ref{lem32}.
 By the direct computation we have
 $$
  (A\bar P_1-\bar P_2)' (s)
   = \Bigl(n-2-2n\Bigl(\frac{F}{f}\Bigr)'(s)\Bigr)
      \Bigl( A \frac{(\bar r_1(s))^{n-1}}{\bar r_1'(s)}
             - \frac{(\bar r_2(s))^{n-1}}{\bar r_2'(s)} \Bigr)
   \le 0
 $$
 for $s \in (s_0,s_I]$ and thus $A\bar P_1(s_0)-\bar P_2(s_0)>0$.
 On the other hand, since $\bar r_1(s_0)\ge\bar r_2(s_0)\ge0$, we have
 $$
  A\bar P_1(s_0)-\bar P_2(s_0)
  = -2F(s_0)(\bar r_2(s_0))^{n-2}((\bar r_1(s_0))^2-(\bar r_2(s_0))^2)
  \le 0,
 $$
 which is a contradiction.

 (ii)
 We note that
 \begin{equation*}
  \lim_{s\to M_1} \bar r_1'(s) = - \infty < \bar r_2'(M_1) < 0,
 \end{equation*}
 and hence
 \begin{equation*}
  \lim_{s\to M_1} \frac{\bar r_1(s)}{\bar r_1'(s)} = 0 >
  \frac{\bar r_2(M_1)}{\bar r_2'(M_1)}.
 \end{equation*}
 Therefore there exists $\delta>0$ such that \eqref{barr1>barr2} holds
 for $s \in (M_1-\delta,M_1)$.
 Arguing the same as (i), with $M_1$ instead of $s_I$,  we can conclude
 that \eqref{at_beta} holds.
\end{proof}

\begin{prop}\label{Intersec} {\bf(Step Six)}
Assume that $f$ satisfies $(f_1)$--$(f_4)$.
 Let $u_1$ and $u_2$ be positive solutions of \eqref{R} with $b<\infty$
 such that $0<u_1'(a)<u_2'(a)$.
 If $\bar r_1(M_1) < \bar r_2(M_1)$  then $\bar r_1(s)$ and $\bar r_2(s)$
 intersect in $(B,M_1)$.
\end{prop}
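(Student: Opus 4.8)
The plan is to argue by contradiction: suppose that $\bar r_1(M_1)<\bar r_2(M_1)$ but that $\bar r_1$ and $\bar r_2$ do \emph{not} intersect in $(B,M_1)$. Since $\bar r_1(M_1)<\bar r_2(M_1)$, the absence of an intersection in $(B,M_1)$ forces $\bar r_1(s)<\bar r_2(s)$ throughout $(B,M_1)$. The goal is to show this strict separation cannot persist all the way down to the boundary, where both $\bar r_i(s)\to b$ as $s\to 0^+$. The key point is that superlinearity $(f_4)$ controls the \emph{rate} at which the two solutions descend, and I expect it to force the slower-separating branch to catch up before reaching $s=B$.

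The natural device is again the Erbe--Tang functional $\bar P_i$, together with a monotonicity comparison. First I would record that on $(B,M_1)$ we have $\bar r_1(s)<\bar r_2(s)$ with both $\bar r_i'(s)<0$, and examine the quantity
$$
 \Phi(s) := \frac{(\bar r_1(s))^{n-1}}{\bar r_1'(s)} - \frac{(\bar r_2(s))^{n-1}}{\bar r_2'(s)},
$$
which equals $(\bar r_1)^{n-1}u_1'(\bar r_1)-(\bar r_2)^{n-1}u_2'(\bar r_2)$ and whose derivative, computed exactly as in Step Four, is $-f(s)\bigl((\bar r_1)^{n-1}\bar r_1'-(\bar r_2)^{n-1}\bar r_2'\bigr)$. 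Near $s=M_1$ one has $\Phi(s)\to 0^-$ from the $\bar r_1$ side blowing up, so $\Phi<0$ just below $M_1$. The plan is to use $(f_4)$ to build an appropriate auxiliary comparison — here superlinearity enters, since $f(s)\le f'(s)(s-B)$ is precisely the hypothesis not needed in the $b=\infty$ case (Theorem \ref{2}) and is what forces an intersection when $b<\infty$. Concretely I would integrate the sign information on $\Phi$ (or a suitably weighted version $A\bar P_1-\bar P_2$ as in Step Five) from a point near $M_1$ down toward $B$, and show that the hypothesized ordering $\bar r_1<\bar r_2$ produces a term that, evaluated at the lower endpoint where $F(s)<0$ for $s\in(0,B)$, has the wrong sign.

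The cleanest route is likely to combine Steps Four and Five in reverse: if no intersection occurred in $(B,M_1)$, then $s_I$ would have to lie at or below $B$, and one could run the monotonicity argument for $\bar P_i$ (nondecreasing by $(f_3)$) on all of $(B,M_1]$. This yields an inequality on $\bar P_1-\bar P_2$ at $s=B$; but at $s=B$ we have $F(B)<0$ (since $\int_0^\beta f=0$ with $f<0$ near $0$, so $F$ is negative on $(0,\beta)$ and in particular $F(B)<0$), and the explicit difference $\bar P_1(B)-\bar P_2(B)$ reduces, using $\bar r_1(B)<\bar r_2(B)$, to a quantity with a definite sign contradicting the propagated inequality. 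The main obstacle I anticipate is bookkeeping the role of $(f_4)$: showing rigorously that superlinearity is what prevents $\bar r_1$ and $\bar r_2$ from running parallel down to the boundary without crossing. I would expect to reduce this to a differential inequality for $\bar r_1-\bar r_2$ (or for $\log(\bar r_2/\bar r_1)$) in which the coefficient coming from $f$ is sign-controlled by $(f_4)$, and then derive a contradiction with the boundary condition $\bar r_1(0^+)=\bar r_2(0^+)=b$.
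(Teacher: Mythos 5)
Your sketch never closes the one gap that matters: how $(f_4)$ actually forces the crossing. The paper's proof of Step Six abandons the $\bar P$-functional machinery entirely and uses a classical Sturm--Wronskian comparison instead. Assuming no intersection, the orderings already established (Steps One and Two on the ascending branches, plus $\bar r_1(M_1)<\bar r_2(M_1)$ and the no-crossing hypothesis on the descending branches) give $B\le u_1\le u_2$ on $[r_1(B),\bar r_1(B)]$, and one integrates the identity
$$
\frac{d}{dr}\Bigl(r^{n-1}\bigl(u_1'(u_2-B)-u_2'(u_1-B)\bigr)\Bigr)
=-r^{n-1}(u_1-B)(u_2-B)\Bigl(\frac{f(u_1)}{u_1-B}-\frac{f(u_2)}{u_2-B}\Bigr)
$$
over $[r_1(B),\bar r_1(B)]$ (over $[a,b]$ when $B=0$). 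Here $(f_4)$ enters with an exact meaning: $f(s)\le f'(s)(s-B)$ says precisely that $s\mapsto f(s)/(s-B)$ is nondecreasing on $(B,\infty)$, so $u_1\le u_2$ makes the right-hand side nonnegative; since $u_1=B$ at both endpoints, the boundary terms reduce to $r^{n-1}u_1'(u_2-B)$, which is strictly positive at $r_1(B)$ (where $u_1'>0$) and nonpositive at $\bar r_1(B)$ (where $u_1'\le0$) --- a contradiction. This is the idea missing from your write-up; you yourself flag ``bookkeeping the role of $(f_4)$'' as the anticipated obstacle and defer it to a hoped-for differential inequality, but that bookkeeping is the entire content of the proposition.

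The concrete route you do sketch also fails on its own terms. A preliminary sign slip: as $s\to M_1^-$, $(\bar r_1)^{n-1}/\bar r_1'\to 0$ while $(\bar r_2(M_1))^{n-1}/\bar r_2'(M_1)$ is finite and negative, so $\Phi\to -(\bar r_2(M_1))^{n-1}/\bar r_2'(M_1)>0$; thus $\Phi$ is positive, not negative, just below $M_1$ (this is exactly how Step Four starts). More seriously, propagating $\bar P_1-\bar P_2>0$ down to $s=B$ (which the Step Four argument does permit in your no-intersection scenario, with $B$ in place of $s_I$) produces no contradiction there: when $B>0$ one has $f(B)=0$ while $F(B)<0$, so $F(s)/f(s)\to-\infty$ as $s\downarrow B$, and since $\Phi>0$ the term $-2n(F/f)\Phi$ drives $\bar P_1(s)-\bar P_2(s)\to+\infty$, i.e.\ the propagated inequality is entirely consistent at $B$ rather than contradicted. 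The clean collapse you are recalling from Step Five,
$$
A\bar P_1(s_0)-\bar P_2(s_0)
=-2F(s_0)\,(\bar r_2(s_0))^{n-2}\bigl((\bar r_1(s_0))^2-(\bar r_2(s_0))^2\bigr),
$$
occurs only at a point $s_0$ where $\bar r_1/\bar r_1'=\bar r_2/\bar r_2'$ --- that equality is what cancels the $F/f$ and $(\bar r)^n/(\bar r')^2$ terms after weighting by $A$ --- and your scenario supplies no such point at $s=B$. Since the $P$-functionals are built from $F/f$, they are structurally the wrong tool at and below $s=B$ (where $f$ vanishes or changes sign); this is precisely why the paper switches methods at this step, and why your plan cannot be completed as stated.
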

\begin{proof}

 Assume that $\bar r_1(M_1) < \bar r_2(M_1)$ and
 $\bar r_1(s)$ and $\bar r_2(s)$ do not intersect in $(B,M_1)$.
 First we suppose that $B>0$.
 From Propositions \ref{subida} and \ref{subida2} it follows that
 $r_1(s)>r_2(s)$ for $s \in (0,M_1]$.
 Then $u_2(r)>u_1(r) \ge B$ for all $r\in[r_1(B),\bar r_1(B))$ and
 $u_2(r_1(B))>u_1(r_1(B))=B$ and $u_2(\bar r_1(B))\ge u_1(\bar r_1(B))=B$.
 By multiplying the equations satisfied by $u_1$, $u_2$ by $u_2-B$ and
 $u_1-B$ respectively and subtracting, we obtain
 \begin{equation}\label{dif}
 \frac{d}{dr} \Bigl(r^{n-1}(u_1'(u_2-B)-u_2'(u_1-B))\Bigr)
  = -r^{n-1}(u_1-B)(u_2-B)\Bigl(\frac{f(u_1)}{u_1-B}
    -\frac{f(u_2)}{u_2-B}\Bigr).
 \end{equation}
 Since by $(f_4)$ the right hand side of \eqref{dif} is nonnegative in
 $[r_1(B),\bar r_1(B)]$, by integrating \eqref{dif} over this interval,
 we obtain the contradiction
 $$
  0 \ge  (\bar r_1(B))^{n-1}u_1'(\bar r_1(B))(u_2(\bar r_1(B))-B)
    \ge (r_1(B))^{n-1}u_1'(r_1(B))(u_2(r_1(B))-B)>0.
 $$
 Now we suppose that $B=0$.
 Proposition \ref{subida2} implies that $r_1(s)>r_2(s)$ for $s \in (0,M_1)$.
 Since $\bar r_1(M_1) < \bar r_2(M_1)$ and
 $\bar r_1(s)$ and $\bar r_2(s)$ do not intersect in $(B,M_1)$, we conclude
 that $u_2(s)>u_1(s)$ for $s \in (a,b)$.
 Integrating \eqref{dif} with $B=0$ over $[a,b]$, we have
 \begin{equation}
  0 = - \int_a^b r^{n-1}u_1(r)u_2(r)
      \Bigl(\frac{f(u_1(r))}{u_1(r)}-\frac{f(u_2(r))}{u_2(r)}\Bigr) dr.
 \end{equation}
 However, by $(f_4)$, the right hand side of this equality is positive.
 This is a contradiction.
\end{proof}

\begin{prop}\label{final2}{\bf (Final step)}
 If $f$ satisfies $(f_1)$--$(f_4)$, then there is at most one positive solution
 of \eqref{R} with $b<\infty$.
\end{prop}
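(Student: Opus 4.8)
The plan is to argue by contradiction. Suppose \eqref{R} with $b<\infty$ has two distinct positive solutions. Since two solutions of the ODE initial value problem that share both value and first derivative at $r=a$ must coincide, and both vanish at $a$, the two solutions have different slopes there; by Proposition \ref{1}(ii) these slopes are positive, so after relabeling I may assume $0<u_1'(a)<u_2'(a)$. Every one of Steps one through six then applies to this pair. The strategy is to transport the comparison of the two solutions from the common boundary datum at $r=a$, up through the maxima and down the descending branches, until the forced identity $\bar r_1(0)=\bar r_2(0)=b$ collides with a strict inequality coming from \eqref{at_beta}.

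First I would secure the boundary inequality \eqref{at_beta} at the level $s=\beta$. Proposition \ref{final1} (Step five) produces \eqref{at_beta} as soon as one of its two alternatives holds. If $\bar r_1(M_1)\ge \bar r_2(M_1)$, alternative (ii) applies verbatim. If instead $\bar r_1(M_1)<\bar r_2(M_1)$, then Proposition \ref{Intersec} (Step six) — the one place the superlinearity $(f_4)$ is used, through the identity \eqref{dif} — guarantees that $\bar r_1$ and $\bar r_2$ cross in $(B,M_1)$, and taking the largest crossing point puts us in alternative (i). Either way \eqref{at_beta} holds, giving
\[
\bar r_1(\beta)>\bar r_2(\beta), \qquad \frac{\bar r_1(\beta)}{\bar r_1'(\beta)}>\frac{\bar r_2(\beta)}{\bar r_2'(\beta)}.
\]

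When $\beta=B=0$ this already closes the argument: the first inequality reads $\bar r_1(0)>\bar r_2(0)$, which is absurd since $\bar r_1(0)=\bar r_2(0)=b$. So the substantive case is $\beta>0$, where \eqref{at_beta} sits at the interior level $s=\beta$ and must be carried down to $s=0$. I would do this by showing that the second inequality in \eqref{at_beta} persists,
\[
\frac{\bar r_1(s)}{\bar r_1'(s)}>\frac{\bar r_2(s)}{\bar r_2'(s)}, \qquad \mbox{equivalently} \qquad (\log \bar r_1)'(s)<(\log \bar r_2)'(s), \qquad s\in(0,\beta].
\]
Granting this, integrating $(\log \bar r_i)'$ over $(0,\beta)$ and using $\bar r_1(0)=\bar r_2(0)=b$ yields $\log \bar r_1(\beta)-\log b<\log \bar r_2(\beta)-\log b$, i.e. $\bar r_1(\beta)<\bar r_2(\beta)$, contradicting the first inequality in \eqref{at_beta}.

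The main obstacle is exactly this persistence claim on $(0,\beta]$, since $(f_3)$ is assumed only for $s>\beta$ and $f$ changes sign at $B$. I would set $E(s):=\bar r_1(s)/\bar r_1'(s)-\bar r_2(s)/\bar r_2'(s)$ and compute, using the ODE, that the dimensional terms cancel and
\[
E'(s)=-f(s)\bigl(\bar r_1(s)\bar r_1'(s)-\bar r_2(s)\bar r_2'(s)\bigr),
\]
then run a first down-crossing argument seeded by $E(\beta)>0$. On $(0,B)$, where $f<0$, a first point $s_\dagger$ with $E(s_\dagger)=0$ would satisfy $\bar r_1(s_\dagger)>\bar r_2(s_\dagger)$ (distinctness rules out equality, which would force a common point with common slope) and hence $\bar r_1'(s_\dagger)<\bar r_2'(s_\dagger)<0$, so that $\bar r_1\bar r_1'-\bar r_2\bar r_2'<0$ and therefore $E'(s_\dagger)<0$; this is incompatible with $E$ reaching $0$ from positive values as $s$ decreases (which forces $E'(s_\dagger)\ge0$), so no crossing occurs below $B$. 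The delicate interval is $(B,\beta)$, where $f>0$: there the same computation gives $E'(s_\dagger)>0$, so the energy/first-integral argument no longer excludes a crossing, and one must instead rule it out using the superlinearity $(f_4)$ via the identity \eqref{dif}, exactly as in Step six. I expect this $(B,\beta)$ analysis — reconciling the first-integral functional $V_i$ below $B$ with the $(f_4)$-based comparison above $B$ — to be the crux of the proof.
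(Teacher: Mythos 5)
Your setup and your treatment of the case $\beta=B=0$ coincide with the paper's: Step six plus Step five yield \eqref{at_beta}, and $\bar r_1(0)=\bar r_2(0)=b$ gives the contradiction at once. But in the main case $\beta>B>0$ your proof has a genuine gap, and you have located it yourself: your functional $E(s)=\bar r_1(s)/\bar r_1'(s)-\bar r_2(s)/\bar r_2'(s)$ satisfies $E'(s)=-f(s)\bigl(\bar r_1(s)\bar r_1'(s)-\bar r_2(s)\bar r_2'(s)\bigr)$ (the computation is correct), so the sign of $E'$ at a would-be crossing is governed by the sign of $f$, and on $(B,\beta)$, where $f>0$, nothing prevents $E$ from crossing zero. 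Your suggestion to rule this out ``via \eqref{dif} exactly as in Step six'' is not worked out and is doubtful as stated: \eqref{dif} controls the Wronskian-type quantity $u_1'(u_2-B)-u_2'(u_1-B)$ on a common $r$-interval where both solutions exceed $B$, and has no evident bearing on a crossing of the logarithmic derivatives of the inverse functions $\bar r_i$ at a level $s_\dagger\in(B,\beta)$. (There is also a smaller hole on $(0,B)$: condition $(f_2)$ only gives $f\le 0$ there, so at a crossing point with $f(s_\dagger)=0$ your sign argument degenerates to $E'(s_\dagger)=0$ and excludes nothing.)

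The paper closes exactly this gap by abandoning the $f$-sensitive functional $E$ and instead propagating the Serrin--Tang/Franchi--Lanconelli--Serrin functional $W_i(s)=\bar r_i(s)\sqrt{(u_i'(\bar r_i(s)))^2+2F(s)}$ together with the derivative comparison $|u_1'(\bar r_1(s))|<|u_2'(\bar r_2(s))|$ (equivalently $(\bar r_1-\bar r_2)'(s)<0$). The point is that the monotonicity $(W_1-W_2)'(s)>0$ requires only $F(s)\le 0$, which holds on all of $(0,\beta]$, and not a sign condition on $f$; so the coupled continuation argument runs uniformly across $s=B$ and yields $\bar r_1(0)>\bar r_2(0)$, contradicting $\bar r_1(0)=\bar r_2(0)=b$. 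Note also a bookkeeping error in your reduction: Step six only produces an intersection in $(B,M_1)$, while alternative (i) of Step five requires one in $(\beta,M_1)$; when the largest intersection $s_I$ lies in $(B,\beta]$ you cannot invoke Step five, and \eqref{at_beta} is simply unavailable as a starting point. The paper handles this case by starting the $W$-comparison at $s_0=s_I$, where the weak inequalities $\bar r_1(s_0)\ge\bar r_2(s_0)$ and $\bar r_1(s_0)/\bar r_1'(s_0)\ge\bar r_2(s_0)/\bar r_2'(s_0)$ hold by the definition of $s_I$; your $E$-argument restarted there would still face the same unresolved interval $(B,s_0)$.
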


\begin{proof}
i) First let $\beta=B =0$. Suppose $u_1$ and $u_2$ are two positive solutions of \eqref{R}
 with $0<u_1'(a)<u_2'(a)$. By Proposition \ref{Intersec} if $\bar r_1(M_1)<\bar r_2(M_1)$ there is an intersection point in $(B,M_1)$, so by
 Proposition \ref{final1} we have
\begin{equation}
  \bar r_1(\beta)>\bar r_2(\beta) \quad \mbox{and} \quad
  \frac{\bar r_1(\beta)}{\bar r_1'(\beta)} >
  \frac{\bar r_2(\beta)}{\bar r_2'(\beta)}.
  \end{equation}

 Then we have  $b=\bar r_1(\beta)>\bar r_2(\beta)=b$, a contradiction.
  So in this case we have the uniqueness.

ii) Suppose now that $\beta>B>0$
and  $u_1$ and $u_2$ are two positive solutions of \eqref{R}
 with $0<u_1'(a)<u_2'(a)$.

In order to arrive to a contradiction  we use the functionals
$$
 W_i(s)=\bar r_i(s)\sqrt{(u_i'(\bar r_i(s)))^2+2F(s)},\quad i=1,2,
$$
found in Franchi, Lanconelli and Serrin \cite{fls} or
Serrin and Tang \cite{st}.
By Proposition \eqref{1}, we note that $W_i(s)$, $i=1,2$ are well-defined.

By Proposition \ref{Intersec} if $\bar r_1(M_1)<\bar r_2(M_1)$ there is
an intersection point in $(B,M_1)$, and let $s_I$ be the largest intersection
point of $\bar r_1(s)$ and $\bar r_2(s)$ in $(B,M_1)$.
Let $s_0 \in(0,\beta]$ be defined as $\beta$ if either
$\bar r_1(M_1)\geq \bar r_2(M_1)$ or $\bar r_1(M_1)<\bar r_2(M_1)$ and
$s_I> \beta$.
Let $s_0=s_I$ if $\bar r_1(M_1)<\bar r_2(M_1)$ and $B< s_I\le \beta$.

In all three cases we have, by Proposition \ref {final1} or the definition of $s_I$, that $\bar r_1(s_0) \geq \bar r_2(s_0)$ and
 \begin{equation*}
       \frac{\bar r_1(s_0)}{\bar r_1'(s_0)}
  \geq \frac{\bar r_2(s_0)}{\bar r_2'(s_0)}.
 \end{equation*}
 By the uniqueness of solutions of initial value problem, we note that either
 $\bar r_1(s_0)\ne \bar r_2(s_0)$ or $\bar r_1'(s_0)\ne \bar r_2'(s_0)$.
 Then $W_1(s_0)<W_2(s_0)$ and $|u_1'(\bar r_1(s_0))|<|u_2'(\bar r_2(s_0))|$.
 Also,
 \begin{align*}
  (&W_1 -W_2)'(s)\\
   & = (n-2) \Bigl(\frac{|u_1'(\bar r_1(s))|}{\sqrt{(u_1'(\bar r_1(s)))^2+2F(s)}}
     - \frac{|u_2'(\bar r_2(s))|}{\sqrt{(u_2'(\bar r_2(s)))^2+2F(s)}}\Bigr) \\
    & \quad -2F(s)\Bigl(\frac{1}{|u_1'(\bar r_1(s))|\sqrt{(u_1'(\bar r_1(s)))^2+2F(s)}}
     - \frac{1}{|u_2'(\bar r_2(s))|\sqrt{(u_2'(\bar r_2(s)))^2+2F(s)}}\Bigr).
 \end{align*}
 Since $F(s)<0$ for $s\in(0,s_0)$, we note that, for each fixed
 $s\in(0,s_0)$, the functions
 $$
  x \mapsto \frac{x}{\sqrt{x^2+2F(s)}} \quad \mbox{and} \quad
  x \mapsto \frac{1}{x\sqrt{x^2+2F(s)}}
 $$
 are decreasing for $x>\sqrt{-2F(s)}$.
 Therefore, as long as $|u_1'(\bar r_1(s))|<|u_2'(\bar r_2(s))|$ and $F(s)\le0$,
 we have $(W_1-W_2)'(s)>0$ and $(\bar r_1-\bar r_2)'(s)<0$.
 Assume that there exists $s_1\in(0,s_0)$ such that
 $|u_1'(\bar r_1(s_1))|=|u_2'(\bar r_2(s_1))|$ and
 $|u_1'(\bar r_1(s))|<|u_2'(\bar r_2(s))|$ for $s\in(s_1,s_0]$.
 Then $W_1(s_1)<W_2(s_1)$ and hence $\bar r_1(s_1)<\bar r_2(s_1)$.
 On the other hand, since $(\bar r_1(s)- \bar r_2(s))'<0$ for
 $s\in(s_1,s_0]$, we find that
 \begin{equation*}
  \bar r_1(s_1)-\bar r_2(s_1) > \bar r_1(s_0)-\bar r_2(s_0) \ge 0,
 \end{equation*}
 which is a contradiction.
 Consequently, $|u_1'(\bar r_1(s))|<|u_2'(\bar r_2(s))|$ for $s\in(0,s_0]$.

Hence $W_1(s)-W_2(s)<W_1(s_0)-W_2(s_0)<0$ and $\bar r_1(s)-\bar r_2(s) >\bar r_1(s_0)-\bar r_2(s_0) $ in $[0,s_0)$, from where we obtain the contradiction
  \begin{equation*}
  0 = b - b
    = \bar r_1(0)-\bar r_2(0)
    > \bar r_1(s_0)-\bar r_2(s_0) \ge 0.
 \end{equation*}
 This finishes the proof of Theorem \ref{main0}.

\end{proof}

 \section{Proof of Theorem \ref{2}}

 To prove Theorem \ref{2}, we will follow the same steps as in Theorem \ref{main0}. We assume that \eqref{R}, now with $b=\infty$ and hence $B>0$,  has two distinct
positive solutions $u_1$ and $u_2$ with  $0<u_1'(a)<u_2'(a)$ and arrive at a contradiction.

We will assume only that  $f$ satisfies $(f_1)$--$(f_3)$ for all steps, as in this case we do not need $(f_4)$ for the existence of a
intersection point.

The proof of Step One, Step Two, Step Three, Step Four and Step Five are exactly the same as in Theorem \ref{main0} if you ignore the comments in Step One and  Step Three for the case $B=0$.

\begin{prop}\label{Intersec2} {\bf(Step Six)}
 Assume that $f$ satisfies $(f_1)$--$(f_3)$.
 Let $u_1$ and $u_2$ be positive solutions of \eqref{R} with $b=\infty$,
 and $B>0$ such that $0<u_1'(a)<u_2'(a)$.
 If $\bar r_1(M_1) < \bar r_2(M_1)$  then $\bar r_1(s)$ and $\bar r_2(s)$
 intersect in $(0,M_1)$.
\end{prop}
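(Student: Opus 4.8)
The plan is to argue by contradiction. Suppose $\bar r_1(M_1)<\bar r_2(M_1)$ but $\bar r_1$ and $\bar r_2$ never meet on $(0,M_1)$; since both are continuous on $(0,M_1]$ and $\bar r_1(M_1)<\bar r_2(M_1)$, this forces $\bar r_1(s)<\bar r_2(s)$ for every $s\in(0,M_1]$. Combined with Steps One and Two (Propositions \ref{subida} and \ref{subida2}), which give $r_1(s)>r_2(s)$ on $(0,M_1)$, the superlevel set $\{u_1>s\}=(r_1(s),\bar r_1(s))$ is strictly contained in $\{u_2>s\}=(r_2(s),\bar r_2(s))$ for every $s\in(0,M_1)$, so that $u_1(r)<u_2(r)$ for all $r\in(a,\infty)$. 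In the bounded case this configuration was ruled out by integrating \eqref{dif} and invoking the superlinearity $(f_4)$; here $(f_4)$ is unavailable, so the contradiction must instead be extracted from the decay at infinity recorded in Proposition \ref{u'->0}.

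The next step is to show that the strict ordering $\bar r_1<\bar r_2$ propagates into a strict ordering of the Erbe--Tang functionals. I would first rerun the local argument of Step Four (Proposition \ref{lem32}): starting from $(\bar r_1)^{n-1}/\bar r_1'>(\bar r_2)^{n-1}/\bar r_2'$ in a left neighborhood of $M_1$ and noting that the only ingredients of that computation are $\bar r_1(s)<\bar r_2(s)$ and $f(s)>0$, the inequality $(\bar r_1(s))^{n-1}/\bar r_1'(s)>(\bar r_2(s))^{n-1}/\bar r_2'(s)$ now persists on the whole interval $(B,M_1)$, where $f>0$. Feeding this into $(f_3)$ gives $(\bar P_1-\bar P_2)'\le 0$ on $(\beta,M_1)$, and since $\bar P_1(M_1)=P_1(M_1)>P_2(M_1)\ge \bar P_2(M_1)$ by Propositions \ref{subida2} and \ref{lem31} together with the monotonicity of $P_2$ and $\bar P_2$, we obtain a uniform gap $\bar P_1(s)-\bar P_2(s)\ge \bar P_1(M_1)-\bar P_2(M_1)>0$ on $(\beta,M_1]$. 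At $s=\beta$ the relation $F(\beta)=0$ collapses $\bar P_i(\beta)$ to $-(\bar r_i(\beta))^n(u_i'(\bar r_i(\beta)))^2$, so the gap records a definite comparison of the weighted slopes of the two solutions at height $\beta$.

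It remains to push this information down to $s=0$. Because $f(B)=0$, the quotient $F/f$ and hence $\bar P_i$ are singular at $s=B$, so below $\beta$ I would switch to the Franchi--Lanconelli--Serrin functional $W_i(s)=\bar r_i(s)\sqrt{(u_i'(\bar r_i(s)))^2+2F(s)}$, whose derivative was shown in the Final Step (Proposition \ref{final2}) to satisfy $(W_1-W_2)'>0$ whenever $F(s)\le 0$ and $|u_1'(\bar r_1(s))|<|u_2'(\bar r_2(s))|$. Transferring the slope comparison obtained at $\beta$, one propagates $|u_1'(\bar r_1(s))|<|u_2'(\bar r_2(s))|$ together with $(\bar r_1-\bar r_2)'(s)<0$ down across $(0,\beta)$. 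Finally I would evaluate the behavior as $s\to 0^+$ using Proposition \ref{u'->0}: since $r^{n-1}u_i'(r)\to L_i\in(-\infty,0]$ and $ru_i'(r)\to 0$, the products $(\bar r_i(s))^n(u_i'(\bar r_i(s)))^2$ and $(\bar r_i(s))^nF(s)$ both tend to $0$, forcing $(\bar r_1-\bar r_2)'<0$ to persist and hence $\bar r_1(s)>\bar r_2(s)$ for small $s$, in direct contradiction with $\bar r_1<\bar r_2$ on $(0,M_1]$.

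The main obstacle is exactly this passage to the limit at infinity. Each term defining $\bar P_i$, and each of the two terms in $W_i$, is an indeterminate product of a factor blowing up like $(\bar r_i)^n$ with a factor decaying to $0$, and resolving these rigorously requires the sharp asymptotics of Proposition \ref{u'->0} in both parts (the second part being indispensable when $n=2$, where $L_i=0$). A secondary difficulty is the crossing of the singularity of $F/f$ at $s=B$, which is what dictates the change of functional from $\bar P_i$ on $(\beta,M_1)$ to $W_i$ on $(0,\beta)$ and requires carefully matching the slope comparison across $s=\beta$.
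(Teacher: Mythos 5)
Your overall strategy (a contradiction extracted from the decay at infinity, since $(f_4)$ is unavailable) matches the paper's, but the middle of your argument has a genuine gap. Under the standing assumption $\bar r_1(s)<\bar r_2(s)$ on $(0,M_1]$, everything your functionals produce is a \emph{weighted} slope comparison: the rerun of Step Four gives $(\bar r_1(s))^{n-1}|u_1'(\bar r_1(s))|<(\bar r_2(s))^{n-1}|u_2'(\bar r_2(s))|$, and the $\bar P$-gap at $s=\beta$ (where $F(\beta)=0$) gives $(\bar r_1(\beta))^{n}(u_1'(\bar r_1(\beta)))^{2}<(\bar r_2(\beta))^{n}(u_2'(\bar r_2(\beta)))^{2}$. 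Since the radii are ordered the wrong way here ($\bar r_1<\bar r_2$), neither inequality implies the \emph{unweighted} comparison $|u_1'(\bar r_1(\beta))|<|u_2'(\bar r_2(\beta))|$, nor $W_1(\beta)<W_2(\beta)$ when $n\ge3$ (take, e.g., $|u_1'|=2$, $|u_2'|=1$, $(\bar r_1)^{n}=1$, $(\bar r_2)^{n}=5$); only for $n=2$ do the weights drop out. But this unweighted comparison is precisely the seed you need to start the Franchi--Lanconelli--Serrin propagation on $(0,\beta)$, so the chain breaks there. The paper obtains it without any functional: it proves $u_1'(\bar r_1(s))>u_2'(\bar r_2(s))$ on all of $(0,M_1]$ by a touching-point argument --- at the largest $s_0$ with equality the $f(s_0)$-terms cancel and $\frac{d}{ds}\bigl(u_1'(\bar r_1(s))-u_2'(\bar r_2(s))\bigr)\big|_{s=s_0}=-(n-1)\bigl(1/\bar r_1(s_0)-1/\bar r_2(s_0)\bigr)<0$, contradicting the maximality of $s_0$.

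Second, your endgame is a non sequitur. Under the no-intersection hypothesis $\bar r_1(\beta)-\bar r_2(\beta)<0$, so propagating $(\bar r_1-\bar r_2)'<0$ down to $s=0$ only yields $\bar r_1(s)-\bar r_2(s)>\bar r_1(\beta)-\bar r_2(\beta)$, a negative lower bound; it cannot force $\bar r_1(s)>\bar r_2(s)$ for small $s$. (This inference works in Proposition \ref{final2} and \ref{final22} only because there $\bar r_1(s_0)\ge\bar r_2(s_0)$ holds by Step Five or by the choice of the intersection point $s_I$ --- exactly what is false in the present configuration.) The contradiction must instead be quantitative as $s\to0$: the paper sets $J(s)=|u_1'(\bar r_1(s))|^2-|u_2'(\bar r_2(s))|^2<0$, derives the differential inequality $J'(s)/J(s)<-(n-1)\bar r_1'(s)/\bar r_1(s)$ from the slope comparison and the equation, integrates over $(\varepsilon,M_1/2)$ to obtain $(\bar r_1(M_1/2))^{n-1}J(M_1/2)>(\bar r_1(\varepsilon))^{n-1}|u_1'(\bar r_1(\varepsilon))|^2-(\bar r_2(\varepsilon))^{n-1}|u_2'(\bar r_2(\varepsilon))|^2$, and lets $\varepsilon\to0$ using Proposition \ref{u'->0}(i) to conclude $(\bar r_1(M_1/2))^{n-1}J(M_1/2)\ge0$, a contradiction. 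Note also that your side claim $(\bar r_i(s))^nF(s)\to0$ as $s\to0$ does not follow from Proposition \ref{u'->0} (it is an indeterminate product whose resolution would require decay rates of $u_i$ that the paper never establishes); the correct argument never needs it.
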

\begin{proof}
Assume that $\bar r_1(s)<\bar r_2(s)$ for all $s\in(0,M_1)$.
 We will first prove that $u_1'(\bar r_1(s))>u_2'(\bar r_2(s))$ in $(0,M_1]$.
 Indeed, this is true at $M_1$ by assumption, and assume by
 contradiction that there exists the largest $s_0\in(0,M_1)$ such that
 $u_1'(\bar r_1(s_0))=u_2'(\bar r_2(s_0))$.
 Then,
 \begin{eqnarray*}
  \frac{d}{ds} \Bigl(u_1'(\bar r_1(s))-u_2'(\bar r_2(s))\Bigr) \Bigm|_{s=s_0}
    = \frac{u_1''(\bar r_1(s_0))}{u_1'(\bar r_1(s_0))}
      -\frac{u_2''(\bar r_2(s_0))}{u_2'(\bar r_2(s_0))} \\
    = - (n-1)\Bigl(\frac{1}{\bar r_1(s_0)}-\frac{1}{\bar r_2(s_0)}\Bigr)
       - f(s_0)\Bigl( \frac{1}{u_1'(\bar r_1(s_0))}
                    - \frac{1}{u_2'(\bar r_2(s_0))} \Bigr) \\
    = - (n-1)\Bigl(\frac{1}{\bar r_1(s_0)}-\frac{1}{\bar r_2(s_0)}\Bigr)<0,
 \end{eqnarray*}
 which cannot be. Hence $u_1'(\bar r_1(s))>u_2'(\bar r_2(s))$ in $(0,M_1]$.
 In what follows we use an idea in \cite{st} and consider the functional
 $J(s):=|u_1'(\bar r_1(s))|^2-|u_2'(\bar r_2(s))|^2$.
 Then, for each $s\in(0,M_1)$, we have $J(s)<0$ and
 \begin{align*}
  J'(s) & = 2(u_1''(\bar r_1(s))-u_2''(\bar r_2(s))) \\
   & = 2(n-1)\Bigl(\frac{|u_1'(\bar r_1(s))|}{\bar r_1(s)}
     - \frac{|u_2'(\bar r_2(s))|}{\bar r_2(s)}\Bigr)\\
   & = \frac{2(n-1)}{|u_1'(\bar r_1(s))|}
        \Bigl(\frac{|u_1'(\bar r_1(s))|^2}{\bar r_1(s)}
     - \frac{|u_1'(\bar r_1(s))||u_2'(\bar r_2(s))|}{\bar r_2(s)}\Bigr) \\
   & > \frac{2(n-1)}{|u_1'(\bar r_1(s))|}
        \Bigl(\frac{|u_1'(\bar r_1(s))|^2}{\bar r_1(s)}
     - \frac{|u_1'(\bar r_1(s))|^2+|u_2'(\bar r_2(s))|^2}{2\bar r_2(s)}\Bigr) \\
   & > \frac{2(n-1)}{|u_1'(\bar r_1(s))|\bar r_1(s)}\Bigl(|u_1'(\bar r_1(s))|^2
     - \frac{|u_1'(\bar r_1(s))|^2+|u_2'(\bar r_2(s))|^2}{2}\Bigr) \\
   & = \frac{n-1}{|u_1'(\bar r_1(s))|\bar r_1(s)}J(s),
 \end{align*}
 that is,
 $$
  \frac{J'(s)}{J(s)}<-(n-1)\frac{\bar r_1'(s)}{\bar r_1(s)}.
 $$
 Let now $\varepsilon\in(0,M_1/2)$.
 By integrating this inequality over $(\varepsilon,M_1/2)$, we obtain
 \begin{align*}
  (\bar r_1(M_1/2))^{n-1}J(M_1/2)
   & > (\bar r_1(\varepsilon))^{n-1}J(\varepsilon) \\
   & = (\bar r_1(\varepsilon))^{n-1}(|u_1'(\bar r_1(\varepsilon))|^2
     - |u_2'(\bar r_2(\varepsilon))|^2) \\
   & > (\bar r_1(\varepsilon))^{n-1}|u_1'(\bar r_1(\varepsilon))|^2
     - (\bar r_2(\varepsilon))^{n-1}|u_2'(\bar r_2(\varepsilon))|^2 \\
   & = \frac{((\bar r_1(\varepsilon))^{n-1}u_1'(\bar r_1(\varepsilon)))^2}
            {(\bar r_1(\varepsilon))^{n-1}}
     - \frac{((\bar r_2(\varepsilon))^{n-1}u_2'(\bar r_2(\varepsilon)))^2}
            {(\bar r_2(\varepsilon))^{n-1}}.
 \end{align*}
 By Proposition \ref{u'->0}, there exist $L_i\in (-\infty,0]$, $i=1$, $2$
 such that $r^{n-1}u_i'(r)\to L_i$ as $r\to\infty$.
 Therefore,
 $$
  \lim_{\varepsilon\to0}
   \frac{((\bar r_i(\varepsilon))^{n-1}u_i'(\bar r_i(\varepsilon)))^2}
        {(\bar r_i(\varepsilon))^{n-1}} = 0, \quad i=1,2.
 $$
 and we obtain the contradiction $0>(\bar r_1(M_1/2))^{n-1}J(M_1/2)\ge 0$.
\end{proof}

\begin{prop}\label{final22}{\bf (Final step)}
	
	If $f$ satisfies $(f_1)$-$(f_3)$ there  is at most one positive solutions of \eqref{R} with $b=\infty.$
	
\end{prop}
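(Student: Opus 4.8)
The plan is to reproduce the argument of the Final step for the bounded annulus, Proposition \ref{final2}(ii), using the same functional
$$W_i(s)=\bar r_i(s)\sqrt{(u_i'(\bar r_i(s)))^2+2F(s)},\quad i=1,2,$$
and to replace its terminal identity $\bar r_1(0)=\bar r_2(0)=b$, which is unavailable when $b=\infty$, by an asymptotic evaluation of $W_i$ at the outer end. So I would assume, for contradiction, that \eqref{R} with $b=\infty$ admits two distinct positive solutions, normalized so that $0<u_1'(a)<u_2'(a)$. Since $b=\infty$ forces $B>0$, condition $(f_2)$ gives $\beta>B>0$, and by Proposition \ref{1}(i) each $W_i$ is well defined on $(0,M_i]$.

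First I would fix a base point $s_0\in(0,\beta]$ exactly as in Proposition \ref{final2}(ii): set $s_0=\beta$ when $\bar r_1(M_1)\ge\bar r_2(M_1)$, or when $\bar r_1(M_1)<\bar r_2(M_1)$ and the largest intersection point $s_I$ of $\bar r_1,\bar r_2$ lies above $\beta$, and set $s_0=s_I$ otherwise. Here the existence of $s_I$ in $(0,M_1)$ is supplied by the $b=\infty$ version of Step Six, Proposition \ref{Intersec2}. In every case, Step Five (Proposition \ref{final1}) or the defining crossing property of $s_I$ yields $\bar r_1(s_0)\ge\bar r_2(s_0)$ together with $\bar r_1(s_0)/\bar r_1'(s_0)\ge\bar r_2(s_0)/\bar r_2'(s_0)$, with one strict inequality forced by uniqueness for the initial value problem; hence $W_1(s_0)<W_2(s_0)$ and $|u_1'(\bar r_1(s_0))|<|u_2'(\bar r_2(s_0))|$. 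The propagation argument of Proposition \ref{final2} then transfers without change, since it nowhere uses the finiteness of $b$: using that $x\mapsto x/\sqrt{x^2+2F(s)}$ and $x\mapsto 1/(x\sqrt{x^2+2F(s)})$ are decreasing for $x>\sqrt{-2F(s)}$ whenever $F(s)<0$, which holds throughout $(0,\beta)$, one gets $|u_1'(\bar r_1(s))|<|u_2'(\bar r_2(s))|$ on all of $(0,s_0]$ and $(W_1-W_2)'(s)>0$ there, so that
$$W_1(s)-W_2(s)<W_1(s_0)-W_2(s_0)<0,\quad s\in(0,s_0).$$

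The genuinely new ingredient, and the step I expect to be the main obstacle, is the evaluation of $W_i$ as $s\to0^+$, where now $\bar r_i(s)\to\infty$. Writing $r=\bar r_i(s)$ I would use $W_i(s)^2=r^2[(u_i'(r))^2+2F(u_i(r))]$ and show that both terms vanish. On one hand, Proposition \ref{u'->0}(ii) gives $ru_i'(r)\to0$, so $r^2(u_i'(r))^2\to0$. On the other hand, for large $r$ the value $u_i(r)$ lies in $(0,\beta)$, where $F<0$; applying Proposition \ref{1}(i) to $I_i(r)=(u_i'(r))^2+2F(u_i(r))>0$ gives $0<-2F(u_i(r))<(u_i'(r))^2$, whence $0<-2r^2F(u_i(r))<(ru_i'(r))^2\to0$. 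Therefore $\lim_{s\to0^+}W_i(s)=0$ for $i=1,2$, so $\lim_{s\to0^+}(W_1(s)-W_2(s))=0$; this contradicts the strict bound $W_1(s_0)-W_2(s_0)<0$ displayed above, and the contradiction rules out two distinct positive solutions. The delicate point is exactly this limit: it is the interplay between the decay rate $ru_i'\to0$ and the sign information $I_i>0$ that forces $r^2F(u_i(r))\to0$, with no extra hypothesis on $f$ near the origin required.
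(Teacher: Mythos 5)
Your proposal is correct and follows essentially the same route as the paper: the same functional $W_i$, the same choice of base point $s_0$ via Propositions \ref{Intersec2} and \ref{final1}, the same propagation of $W_1(s)-W_2(s)<W_1(s_0)-W_2(s_0)<0$ down to $s$ near $0$, and the same outer-end evaluation via Proposition \ref{u'->0}(ii). The only cosmetic difference is at the very end: the paper keeps $0<W_1(s)$ and squeezes $W_2(s)\le \bar r_2(s)|u_2'(\bar r_2(s))|\to 0$, whereas you show $W_i(s)\to 0$ for both $i$ (using Proposition \ref{1}(i) to control $\bar r_i(s)^2F(s)$), which yields the identical contradiction.
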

\begin{proof}

Suppose   $u_1$ and $u_2$ are two positive solutions of \eqref{R} with $b=\infty$
and  $0<u_1'(a)<u_2'(a)$.

We use, as in Theorem \ref{main0},  the functionals
$$
 W_i(s)=\bar r_i(s)\sqrt{(u_i'(\bar r_i(s)))^2+2F(s)},\quad i=1,2.
$$
By Proposition \ref{Intersec2} if $\bar r_1(M_1)<\bar r_2(M_1)$ there is
an intersection point in $(0,M_1)$, and let $s_I$ be the largest intersection
point of $\bar r_1(s)$ and $\bar r_2(s)$ in $(B,M_1)$.
Let $s_0 \in(0,\beta]$ be defined as $\beta$ if either
$\bar r_1(M_1)\geq \bar r_2(M_1)$ or  $\bar r_1(M_1)<\bar r_2(M_1)$ and
$s_I> \beta$.
Let $s_0=s_I$ if $\bar r_1(M_1)<\bar r_2(M_1)$ and $0< s_I\le \beta$.

In all three cases we have, by Proposition \ref {final1} or the definition of $s_I$, that $\bar r_1(s_0) \geq \bar r_2(s_0)$ and
 \begin{equation*}
       \frac{\bar r_1(s_0)}{\bar r_1'(s_0)}
  \geq \frac{\bar r_2(s_0)}{\bar r_2'(s_0)}.
 \end{equation*}
 By the uniqueness of solutions of initial value problem, we note that either
 $\bar r_1(s_0)\ne \bar r_2(s_0)$ or $\bar r_1'(s_0)\ne \bar r_2'(s_0)$.
 Then $W_1(s_0)<W_2(s_0)$ and $|u_1'(\bar r_1(s_0))|<|u_2'(\bar r_2(s_0))|$.

 As in the proof of Theorem \ref{main0} we can now prove that
 $W_1(s)-W_2(s)<W_1(s_0)-W_2(s_0)<0$ and $\bar r_1(s)-\bar r_2(s) >\bar r_1(s_0)-\bar r_2(s_0) $ in $(0,s_0)$.

 By Proposition \ref{1} and $(f_2)$, we observe that
 \begin{align*}
 0 < W_1(s)
 & < W_2(s) + W_1(s_0)-W_2(s_0) \\
 & < \bar r_2(s)\sqrt{(u_2'(\bar r_2(s)))^2} + W_1(s_0)-W_2(s_0) \\
 & = \bar r_2(s) |(u_2'(\bar r_2(s))| + W_1(s_0)-W_2(s_0)
 \end{align*}
 Observe that  $\bar r_i(s) \to \infty$ as $s \to 0$. Letting $s \to 0$, by               Proposition             \ref{u'->0},  we have $0 \le W_1(s_0)-W_2(s_0) < 0$.  A contradiction.

This finishes the proof of Final step and hence of Theorem \ref{2}.
\end{proof}
\section{Proof of the Corollaries}
\label{proofofcor}

In this section we give a proof of Corollaries \ref{cor1} and \ref{cor2}.

\begin{proof}[\it Proof of Corollary \ref{cor1}]
 Assume that $f(s)=s^p+s^q$ for $s \ge 0$ and $1\le q<p$.
 Then $f \in C^1[0,\infty)$ and $(f_1)$, $(f_2)$ and $(f_4)$ hold with $B=0$.
 By Theorem \ref{main0}, it is enough to prove that $(f_3)$ with $\beta=0$
 is satisfied when one of the (i)--(iii) of Corollary \ref{cor1} holds.
 Since
 \begin{equation*}
  \left( \frac{F}{f} \right)'(s) = 1 - \frac{F(s)f'(s)}{(f(s))^2},
 \end{equation*}
 condition $(f_3)$ with $\beta=0$ holds, if
 \begin{equation}
  g(s):=\frac{F(s)f'(s)}{(f(s))^2} \le \frac{n+2}{2n}, \quad s>0.
   \label{g>(n+2)/(2n)}
 \end{equation}
 Hence we will show \eqref{g>(n+2)/(2n)}.
 Observe that
 \begin{equation*}
  g(s) = \frac{p}{p+1} + \frac{p-q}{(p+1)(q+1)} h(s^{p-q}+1),
 \end{equation*}
 where
 \begin{equation*}
  h(t) = \frac{p-q-1}{t} - \frac{p-q}{t^2}, \quad t\ge 1.
 \end{equation*}

 Now we show that \eqref{g>(n+2)/(2n)} holds, if the following (a) or (b) holds:
 \begin{enumerate}
  \item[(a)] $p\le q+1$ and $(n-2)p\le (n+2)$;
  \item[(b)] $p>q+1$ and $\frac{(p+q+1)^2}{2(p+1)(q+1)} \le \frac{n+2}{n}$.

 \end{enumerate}

 First we assume (a).
 Then $h(t)$ is increasing and hence
 \begin{equation*}
  g(s) < \frac{p}{p+1}, \quad s>0.
 \end{equation*}
 If $n>2$, then $p\le (n+2)/(n-2)$ implies $p/(p+1)\le(n+2)/(2n)$.
 If $n=2$, then $p/(p+1)<1=(n+2)/(2n)$ for $p>1$.
 Therefore, \eqref{g>(n+2)/(2n)} holds.

 Next we suppose (b).
 Then we see that $2(p-q)/(p-q-1)>1$ and
 \begin{equation*}
  h(t) \le h\left( \frac{2(p-q)}{(p-q-1)} \right)
  = \frac{(p-q-1)^2}{4(p-q)}, \quad t \ge 1.
 \end{equation*}
 Hence
 \begin{equation*}
  g(s) \le \frac{p}{p+1} + \frac{(p-q-1)^2}{4(p+1)(q+1)}
  = \frac{(p+q+1)^2}{4(p+1)(q+1)}, \quad s>0,
 \end{equation*}
 which implies \eqref{g>(n+2)/(2n)}.

 Finally we check (a) or (b) holds when one of (i)--(iv) is satisfied.
 If (i) holds, then $(n+2)/(n-2)\le2$ and hence
 $p\le 2 \le q+1$, which shows (a) is satisfied.
 If (ii) holds, then
 \begin{equation*}
  p \le \frac{n+2}{n-2} = \frac{4}{n-2} + 1 \le q + 1
 \end{equation*}
 and hence (a) holds.

 Now we assume (iii).
 We claim that $p\le(n+2)/(n-2)$.
 Since
 \begin{equation*}
  n^2(q+1)^2 - \frac{2n^2(n+2)}{n-2}(q+1) + \frac{n^2(n+2)^2}{(n-2)^2}
  = n^2\left( (q+1) - \frac{n+2}{n-2} \right)^2 \ge 0,
 \end{equation*}
 we have
 \begin{multline*}
  4(q+1)^2 - \frac{4n(n+2)}{n-2}(q+1) + \frac{n^2(n+2)^2}{(n-2)^2}  \\
    \ge (4-n^2)(q+1)^2
      + \left( \frac{2n^2(n+2)}{n-2} -\frac{4n(n+2)}{n-2} \right) (q+1),
 \end{multline*}
 which is equivalent to
 \begin{equation*}
  \left( 2(q+1) - \frac{n(n+2)}{n-2} \right)^2 \ge (n+2)(q+1)(n+2-(n-2)q).
 \end{equation*}
 Since $n>2$ and $q<4/(n-2)$, we note that
 \begin{equation*}
  2(q+1) - \frac{n(n+2)}{n-2} < 0, \quad n+2-(n-2)q>0.
 \end{equation*}
 Then we have
 \begin{equation*}
  \frac{n(n+2)}{n-2} - 2(q+1) \ge \sqrt{(n+2)(q+1)(n+2-(n-2)q)},
 \end{equation*}
 which shows $p\le (n+2)/(n-2)$.
 Hence, if $p\le q+1$, then (a) is satisfied.
 We suppose that $p>q+1$.
 Then we note that $np - 2(q+1)>2p-2(q+1)>0$.
 From the last inequality of (iii), it follows that
 \begin{equation*}
  \sqrt{(n+2)(q+1)(n+2-(n-2)q)} \ge np - 2(q+1),
 \end{equation*}
 which implies that
 \begin{equation*}
  (n+2)(q+1)(2n-(n-2)(q+1)) \ge (np-2(q+1))^2.
 \end{equation*}
 This is equivalent to the second inequality in (b).
 Then we have (b).

 We assume (iv).
 If $p\le q+1$, then (a) holds.
 If $p>q+1$, then $0<p-(q+1) \le 2\sqrt{q+1}$,
 and hence $(p-(q+1))^2\le 4(q+1)$, which is equivalent to
 $(p+q+1)^2/((p+1)(q+1))\le 4$.
 Therefore, (b) is satisfied.
\end{proof}
\begin{proof}[\it Proof of Corollary \ref{cor2}]
	
It can be easily verified that in this case
\begin{equation}\label{sub}\Bigl(\frac{F}{f}\Bigr)'(s)=\frac{1}{p+1}+\frac{(p-q)(p-q-1)}{(p+1)(q+1)}\frac{1}{t-1}+\frac{(p-q)^2}{(p+1)(q+1)}\frac{1}{(t-1)^2}
\end{equation}
where $t=s^{p-q}$. Hence  if  $p\ge q+1$, we have that
$$\Bigl(\frac{F}{f}\Bigr)'(s)\ge \frac{1}{p+1}$$
and therefore $(f_3)$ is satisfied if either
\begin{equation}\label{q+1<p<(n+2)/(n-2)}
 n>2, \quad q+1\le p\le\frac{n+2}{n-2}
\end{equation}
or
$$n=2, \quad q+1 \le p.$$
If $q+1>p>q$ the minimum of $ \Bigl(\frac{F}{f}\Bigr)'(s)$ is at
$(t-1) =\frac{2(p-q)}{(1+p-q)}$ and so
\begin{equation}\label{min2}
\Bigl(\frac{F}{f}\Bigr)'(s)\geq\frac{1}{(p+1)}(1-\frac{(q+1-p)^2}{4(q+1)}).
\end{equation}
Note that $q+1-p=1-(p-q)<1$ and the right hand side in \eqref{min2}
is positive and we conclude that $(f_3)$ is satisfied if either
$$n>2, \quad q<p<q+1, \quad
  p \le \frac{\Bigl(1-\frac{(q+1-p)^2}{2(q+1)}\Bigr)n+2}{n-2}$$
or
$$n=2, \quad q<p<q+1.$$
Consequently, if (iii) is satisfied, then $(f_3)$ holds.

In the case $n>2$, we note that
\begin{equation*}
 p \le \frac{\Bigl(1-\frac{(q+1-p)^2}{2(q+1)}\Bigr)n+2}{n-2}
\end{equation*}
is equivalent to $q \le (n+2)/(n-2)$ and $P_-(q)\le p \le P(q)$, where
\begin{equation*}
 P_-(q):=\frac{2(q+1)-\sqrt{(n+2)(q+1)(n+2-(n-2)q)}}{n}.
\end{equation*}
Moreover, $P(q)\ge q+1$ is equivalent to $q\le 4/(n-2)$.
We find that $P_-(q)\le q$ for $0<q<(n+2)/(n-2)$.
Hence, $(f_3)$ holds if one of \eqref{q+1<p<(n+2)/(n-2)} and the following
is satisfied:
\begin{gather}
 n>2, \quad q\le 4/(n-2), \quad q<p<q+1; \label{q<4/(n-2)} \\
 n>2, \quad 4/(n-2)<q\le(n+2)/(n-2), \quad q<p\le P(q). \label{q>4/(n-2)}
\end{gather}
From \eqref{q+1<p<(n+2)/(n-2)} and \eqref{q<4/(n-2)}, it follows that
(i) implies $(f_3)$.
Since
\begin{equation*}
 q<P(q) \quad \mbox{for} \ 0<q<\frac{4+\sqrt{2n(n+2)}}{2(n-2)},
\end{equation*}
by \eqref{q>4/(n-2)}, we conclude that (ii) implies $(f_3)$.

As for the superlinear assumption $(f_4)$ that is needed when $b<\infty$, it can be easily verified that it is satisfied if $p>1$ and $0<q<p$. Indeed, this condition is satisfied provided $g(s)>0$ for all $s>1$, where
$$g(s)=(p-1)s^{p-q+1}-ps^{p-q}-(q-1)s+q,$$
from where the result follows.

\end{proof}

\end{document}